%------------------------------------------------------------------------------
% Here please write the date of submission of paper or its revisions:
%------------------------------------------------------------------------------
%
\documentclass[12pt, reqno]{amsart}
\usepackage{amsmath, amsthm, amscd, amsfonts, amssymb, graphicx, color}
\usepackage[bookmarksnumbered, colorlinks, plainpages]{hyperref}
\hypersetup{colorlinks=true,linkcolor=red, anchorcolor=green, citecolor=cyan, urlcolor=red, filecolor=magenta, pdftoolbar=true}

\textheight 22.5truecm \textwidth 14.5truecm
\setlength{\oddsidemargin}{0.35in}\setlength{\evensidemargin}{0.35in}

\setlength{\topmargin}{-.5cm}

\newtheorem{theorem}{Theorem}[section]
\newtheorem{lemma}[theorem]{Lemma}
\newtheorem{proposition}[theorem]{Proposition}
\newtheorem{assumption}[theorem]{Assumption}
\newtheorem{corollary}[theorem]{Corollary}

\newtheorem{question}[theorem]{Question}

\newcommand{\quotes}[1]{``#1''}

\theoremstyle{definition}

\newtheorem{example}[theorem]{Example}

\theoremstyle{remark}
\newtheorem{remark}[theorem]{Remark}

\numberwithin{equation}{section}

\begin{document}

\setcounter{page}{1}

\title[Cauchy transform and polynomial modules]{Cauchy transform and uniform approximation by polynomial modules}

\author[L. Yang]{Liming Yang$^1$}

\address{$^1$Department of Mathematics, Virginia Polytechnic Institute and State University, Blacksburg, VA 24061.}
\email{\textcolor[rgb]{0.00,0.00,0.84}{yliming@vt.edu}}

%\dedicatory{This paper is dedicated to Professor ABCD}

%\let\thefootnote\relax\footnote{Copyright 2018 by the Tusi Mathematical Research Group.}

%\subjclass[2010]{Primary 47A15; Secondary 30C85, 31A15, 46E15, 47B38}

%\keywords{Shift Invariant Subspaces and Bounded Point Evaluations}

%\date{Received: xxxxxx; Revised: yyyyyy; Accepted: zzzzzz.
%\newline \indent $^{*}$Corresponding author}

\begin{abstract}
For a compact subset $K$ of the complex plane $\mathbb C,$ 
let $C(K)$ denote the algebra of continuous functions on $K$.
For an open subset $U \subset K,$
let $A(K,U) \subset C(K)$ be the algebra of functions that are analytic in 
$U.$ We show that there exists $\phi\in A(K,U)$ so that each $f\in A(K,U)$ can uniformly 
be approximated by $\{p_n + q_n\phi\}$ on $K$, where $p_n$ and $q_n$ are 
analytic polynomials in $z$. In particular, $\phi$ can be chosen as a Cauchy 
transform of a finite positive measure $\eta$ compactly supported in $\mathbb C 
\setminus U.$ Recent developments of analytic capacity and Cauchy transform provide 
us useful tools in our proofs.
\end{abstract} \maketitle

\section{\textbf{Introduction}}
\smallskip

Let $\mathcal{P}$ denote the set of polynomials in the complex variable $z.$ 
For a compact subset $K$ of the complex plane $\mathbb C,$ let $Rat(K)$ be the set of all rational functions with poles off $K$ and let $C(K)$ denote the Banach algebra of complex-valued continuous functions on $K$ with customary norm $\|.\|_K$ ($\|.\|_{C(K)}$, or $\|.\|$). Let $P(K)$ and $R(K)$ denote the closures in $C(K)$ of $\mathcal{P}$ and $Rat(K),$ respectively. For an open subset $U 
\subset K,$ let $A(K,U) \subset C(K)$ be the algebra of functions that are analytic in $U.$ We denote  $A(K)=A(K,\text{int}(K)),$ where $\text{int}(K)$ stands for the interior of $K.$ 
For $\phi\in C(K),$ let $P(\phi,K)$ and $PR(\phi,K)$ denote the closure in $C(K)$ of $\mathcal{P} + \mathcal{P} \phi$ and $Rat(K) + \mathcal{P} \phi,$ respectively.

We define the analytic capacity of a compact subset $E$ by
\begin{eqnarray}\label{GammaDefinition}
\ \gamma(E) = \sup |f'(\infty)|,
\end{eqnarray}
where the supremum is taken over all those functions $f$ that are analytic in $\mathbb C_{\infty} \setminus E$ ($\mathbb C_{\infty} = \mathbb C \cup \{\infty \}$), such that
$|f(z)| \le 1$ for all $z \in \mathbb{C}_\infty \setminus E$; and
$f'(\infty) := \lim _{z \rightarrow \infty} z(f(z) - f(\infty)).$
The analytic capacity of a subset $F$ of $\mathbb{C}$ is given by: 
 \[
 \ \gamma (F) = \sup \{\gamma (E) : E\subset F \text{ compact}\}.
 \]
Good sources for basic information about analytic
capacity are \cite{Du10}, Chapter VIII of \cite{gamelin}, \cite{Ga72}, Chapter V of \cite{C91}, and \cite{Tol14}.

The continuous analytic capacity of a compact set $E \subset \mathbb C$ is defined as
 \begin{eqnarray}\label{AlphaDefinition}
 \ \alpha (E) = sup |f' (\infty )|
 \end{eqnarray}
where the supremum is taken over all complex-valued functions which are continuous in $\mathbb C$, analytic on $\mathbb{C}_\infty \setminus E$, and satisfy $|f(z)| \le 1$ for all $z \in \mathbb C$. For a set $F$, we set 
 \[
 \ \alpha (F) = \sup\{\alpha (E) :~ E \subset F;~ E \text{ compact}\}.
 \]
\smallskip

Let $\mathcal L^2$ denote the Lebesgue measure on $\mathbb C$. Let $M_0(\mathbb C)$ 
be the set of finite complex-valued Borel measures that are compactly supported in $
\mathbb C$ and let $M_0^+(\mathbb C)$ be the set of positive measures in 
$M_0(\mathbb C).$
For $\nu\in M_0(\mathbb C)$ and $\epsilon > 0,$ $\mathcal C_\epsilon(\nu)$ is defined by
\ \begin{eqnarray}\label{CTEDefinition}
\ \mathcal C_\epsilon\nu(z) = \int _{|w-z| > \epsilon}\dfrac{1}{w - z} d\nu (w).
\ \end{eqnarray} 
The (principal value) Cauchy transform
of $\nu$ is defined by
\ \begin{eqnarray}\label{CTDefinition}
\ \mathcal C\nu(z) = \lim_{\epsilon \rightarrow 0} \mathcal C_\epsilon\nu (z)
\ \end{eqnarray}
for all $z\in\mathbb{C}$ for which the limit exists. From 
Corollary \ref{CTExist} (1), we see that \eqref{CTDefinition} is defined for all $z$ except for a set of zero analytic 
capacity. Throughout this paper, the Cauchy transform of a measure always means the principal value of the transform. We say $\mathcal C\nu(z)$ is continuous on $\mathbb C$ if $\mathcal C\nu(z)$ coincides $\mathcal L^2$ with a continuous function on $\mathbb C$. We denote $\text{spt}(\nu)$ the support of $\nu.$

The inner boundary of $K$, denoted by $\partial _I K$, is the set of boundary points which do not belong to the boundary of any connected component of $\mathbb C \setminus K$. The inner boundary conjecture (see \cite{VM84}, Conjecture 2) is: if $\alpha(\partial _I K) = 0$, then $R(K) = A(K)$. X. Tolsa \cite{Tol04} shows
that $\alpha$ is semiadditive and affirmatively answers the conjecture. 
For a compact subset $K$ with $\alpha (\partial _I K) > 0$, it is natural to see how \quotes{big} $A(K)$ is compared to $R(K)$. In \cite{y19}, the author constructed a compact subset $K$ and a function $\phi\in A(K)$ such that $R(K) \ne A(K)$ and $A(K) = PR(\phi,K).$ The following question  is also asked in the paper (Problem 3):
\smallskip

\begin{question}\label{MQuestion} 
For each compact subset $K$ of $\mathbb C$, is there a function $\phi\in A(K)$ such that $A(K) = PR(\phi,K)?$
\end{question}
\smallskip

In this paper, our main theorem below affirmatively answers Question \ref{MQuestion} 
as a special case.
 
\begin{theorem}\label{MTheoremIntro2}
If $K \subset \mathbb C$ is a compact subset and $U\subset K$ is an open subset, then there exists 
$\eta\in M_0^+(\mathbb C)$ with $\text{\text{\text{spt}}}(\eta)\subset \mathbb C \setminus U$ such that 
$\mathcal C\eta$ is continuous on $\mathbb C$ (therefore, $\mathcal C\eta\in A(K,U)$) and $A(K,U) = P(\mathcal C\eta, K)$. 
\end{theorem}
\smallskip

Notice that $K$ may not contain $\text{spt}(\eta).$ Otherwise, we obtain the following theorem.

\begin{theorem}\label{MTheoremIntro3}
If $K \subset \mathbb C$ is a compact subset and $U\subset K$ is an open subset, then there exists 
$\eta\in M_0^+(\mathbb C)$ with $\text{\text{\text{spt}}}(\eta)\subset K \setminus U$ such that 
$\mathcal C\eta$ is continuous on $\mathbb C$ (therefore, $\mathcal C\eta\in A(K,U)$) and $A(K,U) = PR(\mathcal C\eta,K).$ 
\end{theorem}
\smallskip

Our proofs rely on remarkable results on (continuous) analytic capacity from \cite{Tol03} and \cite{Tol04} and modified Vitushkin approximation scheme by P. V. Paramonov \cite{p95}. In section 2, we review some recent results of (continuous) analytic capacity and Cauchy transform that are needed in our analysis. 
In section 3, for $\eta\in M_0^+(\mathbb C)$ with $\text{spt}(\eta)\subset \mathbb C \setminus U$ such that $\mathcal C\eta\in A(K,U),$ we introduce a subspace $L(\mathcal C\eta, K) \subset A(K,U),$ the closure of 
$\{\mathcal C(\varphi \eta) :~\varphi \text{ is smooth with compact support}\}$ in $C(K).$ Theorem \ref{MLemma2}, whose proof depends on Paramonov modified Vitushkin approximation scheme, provides a characterization of $L(\mathcal C\eta, K).$   
Interestingly, as a result, we show that $\mathcal C(f\eta)\in L(\mathcal C\eta, K)$ if $f\in L^\infty (\eta)$ and $\mathcal C(f\eta)$ is continuous.
Our main theorem (Theorem \ref{MTheoremIntro1}) in the section proves that $L(\mathcal C\eta, K) + R(K)$ is dense in $A(K,U)$ under certain capacity assumption (see \eqref{MTheoremIntro1Eq}).
Section 4 discusses the assumptions under which $A(K,U) = P(\mathcal C\eta, K).$ It is shown in  Lemma \ref{MLemma1} that if $\eta \in M_0^+(\mathbb C),$ $\mathcal C\eta \in A(K,U),$ and for $\nu \perp P(\mathcal C\eta, K),$
\begin{eqnarray}\label{IIdentityIntro}
 \ \mathcal C\eta(z) \mathcal C\nu(z) = \mathcal C((\mathcal C\eta)\nu)(z), 
 \end{eqnarray}
$\mathcal L^2-a.a.,$ then $L(\mathcal C\eta, K) \subset P(\mathcal C\eta, K).$ Therefore, we investigate the set $A_\eta = \{\lambda:~ \frac{\mathcal C\eta(z) - \mathcal C\eta(\lambda)}{z-\lambda}\in P(\mathcal C\eta, K)\}.$ Clearly, \eqref{IIdentityIntro} holds $\gamma†|_{A_\eta}-a.a..$ Our assumptions (A), (B),and (C) ensure $\mathcal L^2(\mathbb C\setminus A_\eta) = 0$ (see Lemma \ref{GenLemma3}). Together with \eqref{MTheoremIntro1Eq}, we prove that Theorem \ref{MTheoremIntro1} implies $A(K,U) = P(\mathcal C\eta, K)$ (see Theorem \ref{MTheorem}). 
We construct such a measure $\eta$ satisfying the assumptions (A), (B),  (C), and \eqref{MTheoremIntro1Eq} in section 5. Therefore, Theorem \ref{MTheoremIntro2} and Theorem \ref{MTheoremIntro3} are proved.

Before closing this section, we mention some previous related research results. O’Farrell \cite{O75} derives interesting results which relate approximation
problems for the module $R(K) + \sum_{n=1} ^N R(K)\bar z^n$ in different Lipshitz norms and in the uniform norm.
T. Trent and J. Wang \cite{tw81} shows if $K$ is a compact subset without interior, then $R(K) + R(K)\bar z$ is dense in $C(K)$. J. Carmona \cite{C82} generalizes above result to the module $R(K) + R(K)g$ for a smooth function $g$. J. Verdera \cite{v93} proves that each Dini-continuous function in $\overline{A(K) + A(K)\bar z}$ belongs to $\overline{R(K) + R(K)\bar z}$. Finally, the excellent paper \cite{m04}  proves that $R(K) + R(K)\bar z$ is dense in $A(K) + A(K)\bar z$ for any compact subset $K.$

In \cite{t93}, J. Thomson proves if $R(K) \ne C(K),$ then $R(K) + P(K)\bar z$ is not dense in $C(K).$ The papers \cite{y94} and \cite{y95} study the generalized module $R(K) + P(K)g$ and prove that for a smooth function $g$ with $\bar \partial g \ne 0,$ then $R(K) + P(K)g$ is dense in $A(K) + P(K)g$ if and only if $ A(K) = R(K)$. Moreover, \cite{y19} studies the module $R(K) + \sum_{n=1}^N P(K) \bar z^n$ and shows that $R(K) + \sum_{n=1}^N P(K) \bar z^n$ is dense in $A(K) + \sum_{n=1}^N P(K) \bar z^n$ if and only if $ A(K) = R(K)$. The proofs of above results rely on some refinements of a color scheme developed in \cite{t91}. In \cite{BCY16}, A. Baranov, J. Carmona,  and K. Fedorovskiy consider an interesting analogous problem: find necessary and sufficient conditions so that $P(K) +P(K)\bar z^n$ is dense in $A(K) +A(K)\bar z^n$.

\bigskip

\section{\textbf{Preliminaries}}
\smallskip

It is well known that, for $\nu \in M_0(\mathbb C)$, in the sense of distribution,
 \begin{eqnarray}\label{CTDistributionEq}
 \ \bar \partial \mathcal C(\nu) = - \pi \nu.
 \end{eqnarray} 
The following is a simple relationship of $\mathcal L^2$, $\alpha$, and $\gamma$
 \begin{eqnarray}\label{AAlphaGamma}
 \ \mathcal L^2(E) \le 4\pi\alpha (E)^2, ~ \alpha (E) \le \gamma (E)
 \end{eqnarray}
where $E$ is a bounded measurable subset (see Theorem 3.2 on page 204 of \cite{gamelin}).
The maximal Cauchy transform is defined by
 \[
 \ \mathcal C_*(\nu)(z) = \sup _{\epsilon > 0}| \mathcal C_\epsilon(\nu)(z) |.
 \]

A related capacity, $\gamma _+,$ is defined for subsets $E$ of $\mathbb{C}$ by:
\[
\ \gamma_+(E) = \sup \|\eta \|,
\]
where the supremum is taken over $\eta \in M_0^+(\mathbb C)$ with $\text{spt}(\eta) \subset E$ for which $\|\mathcal{C}(\eta) \|_{L^\infty (\mathbb{C})} \le 1.$ 
Since $\mathcal C\eta$ is analytic in $\mathbb{C}_\infty \setminus \mbox{\text{spt}}(\eta)$ and $|(\mathcal{C}(\eta)'(\infty)| = \|\eta \|$, 
we have:
$\gamma _+(E) \le \gamma (E)$
for all subsets $E$ of $\mathbb{C}$. 
The capacity $\alpha_+$ of a bounded set $E \subset \mathbb C$ is defined as
 \[
 \ \alpha_+(E) = \sup \|\eta\|,
 \]
where the supremum is taken over  $\eta \in M_0^+(\mathbb C)$ with $\text{spt}(\eta) \subset E$ such that $\mathcal C(\eta)$ is continuous on $\mathbb C$  and $|\mathcal C(\eta)(z)|\le 1$ for $z\in \mathbb C$. We
clearly have 
$\alpha_+(E) \le \alpha(E)$,
 because $|\mathcal C(\eta)'(\infty )| = \|\eta\|$.
\smallskip

X. Tolsa has established the following astounding results.

\begin{theorem} \label{TTolsa} (Tolsa's Theorem)

(1) $\gamma_+$ and $\gamma$ are equivalent. $\alpha_+$ and $\alpha$ are equivalent. 
That is, there is an absolute positive constant $C_T$ such that 
\begin{eqnarray}\label{GammaEq}
\ \gamma (E) \le C_T \gamma_+(E),~ \alpha (E) \le C_T \alpha_+(E).
\end{eqnarray}

(2) Semiadditivity of analytic capacity:
\begin{eqnarray}\label{Semiadditive}
\ \gamma \left (\bigcup_{i = 1}^\infty E_i \right ) \le C_T \sum_{i=1}^\infty \gamma(E_i), ~ \alpha \left (\bigcup_{i = 1}^\infty E_i \right ) \le C_T \sum_{i=1}^\infty \alpha (E_i).
\end{eqnarray}

(3) There is an absolute positive constant $C_T$ such that, for any $a > 0$, we have:  
\[
\ \gamma(\{\mathcal{C}_*(\nu)  \geq a\}) \le \dfrac{C_T}{a} \|\nu \|.
\]  
\end{theorem}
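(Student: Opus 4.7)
The plan is to establish the three parts in order, since (2) and (3) both build on (1). In each case only one direction is nontrivial: the inequalities $\gamma_+ \le \gamma$ and $\alpha_+ \le \alpha$ are immediate from the extremal problems, so for (1) the task is $\gamma \le A_T \gamma_+$ and $\alpha \le A_T \alpha_+$.

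For part (1), I would recast analytic capacity through the Menger--Melnikov identity relating $\|\mathcal C\mu\|_{L^2(\mu)}^2$ to the linear growth of $\mu$ plus the curvature
\[
\ c^2(\mu) \;=\; \int\!\!\int\!\!\int c(x,y,z)^2\, d\mu(x)\, d\mu(y)\, d\mu(z),
\]
where $c(x,y,z)$ is the inverse circumradius of the triangle $xyz$. Given a compact $E$ with $\gamma(E) > 0$ and the Ahlfors function $f$ extremal for $\gamma(E)$, integrating $1-f$ against $\bar\partial$ of smooth cutoffs on a Whitney decomposition of $\mathbb C \setminus E$ produces a complex measure $\sigma$ of total variation comparable to $\gamma(E)$. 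The crux is to upgrade $\sigma$ to a \emph{positive} measure $\mu$ on $E$ with $\mu(E) \gtrsim \gamma(E)$, linear growth $\lesssim 1$, and $c^2(\mu) \lesssim \mu(E)$. This invokes the non-homogeneous $T(b)$ theorem of Nazarov--Treil--Volberg, applied to a para-accretive $b$ manufactured from $f$ on the cells of a suitable dyadic lattice. Once $L^2(\mu)$-boundedness of $\mathcal C$ is in hand, a stopping-time / good-$\lambda$ reduction yields a further restriction of $\mu$ with $\|\mathcal C\mu\|_\infty \lesssim 1$, hence $\gamma_+(E) \gtrsim \gamma(E)$. For the $\alpha$ statement, I would regularize $\mu$ by convolving with a smooth bump $\psi_\epsilon$, verify continuity of $\mathcal C(\mu \ast \psi_\epsilon)$, and control the mass loss using commutator estimates.

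For part (2), semiadditivity of $\gamma$ reduces via (1) to semiadditivity of $\gamma_+$. Here $\gamma_+$ admits a quantitatively additive description:
\[
\ \gamma_+(E) \asymp \sup \{\mu(E) : \mu \in M_0^+(\mathbb C),\ \text{spt}(\mu) \subset E,\ \text{linear growth} \le 1,\ c^2(\mu) \le \mu(E)\}.
\]
If $\mu_i$ is near-optimal for $E_i$, then $\mu = \sum \mu_i$ on $\bigcup E_i$ has linear growth $\lesssim 1$, and the triple curvature integral splits into the diagonal pieces $\sum_i c^2(\mu_i)$ plus cross terms absorbed by Cauchy--Schwarz against the diagonal ones. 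This gives $\gamma_+(\bigcup E_i) \lesssim \sum \gamma_+(E_i)$; the same scheme handles $\alpha_+$. For part (3), with (1) and (2) available, I would run a Calder\'on--Zygmund decomposition of $\nu$ adapted to the non-doubling setting, writing $\nu = g + \sum b_j$ with $b_j$ of mean zero on cubes $Q_j$ and $\sum\|b_j\| \lesssim \|\nu\|$; the good part $g$ is controlled by the $L^2$-boundedness supplied by (1), and the level set of the bad part is covered by enlargements of the $Q_j$ whose analytic capacities sum, via (2), to $\lesssim \|\nu\|/a$.

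The single hardest point is the measure-construction step in (1): an Ahlfors function is merely bounded, not \emph{a priori} para-accretive in the sense required for $T(b)$, so substantial preparation (dyadic lattice, stopping intervals, a carefully refined test function $b$, and a reflection argument to pass from complex to positive measures) is needed before the non-homogeneous $T(b)$ theorem can be invoked. All subsequent steps, while technical, are essentially packaging once this core is in place.
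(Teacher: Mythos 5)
The paper does not prove this theorem; it is stated as a citation of Tolsa's results (part (1) for $\gamma$ and the $\gamma$-semiadditivity in (2) from \cite{Tol03}, the $\alpha$ statements from \cite{Tol04}, and (3) from Proposition 2.1 of \cite{Tol02}; see also \cite{Tol14}). So there is no in-paper proof to compare against, and a genuine blind proof would amount to reproving the solution to Painlev\'e's problem. As a descriptive summary, your outline of (1) captures the right ingredients: Menger--Melnikov curvature, the non-homogeneous $T(b)$ theorem of Nazarov--Treil--Volberg, and the delicate construction of a positive test measure from the Ahlfors function. Your sketch of (3) is likewise directionally reasonable.

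The argument you give for (2) is backwards and would not establish semiadditivity. You propose to take near-optimal measures $\mu_i$ for each $E_i$ and form $\mu = \sum_i \mu_i$; but summing positive measures can only produce a \emph{lower} bound for $\gamma_+\bigl(\bigcup_i E_i\bigr)$, which is the trivial direction, and in any case the linear growth bound and the $L^\infty$ bound on $\mathcal C\mu$ need not survive summation when the $E_i$ overlap or cluster (the Cauchy--Schwarz absorption of cross terms in the curvature integral does not repair this). The correct argument runs in the opposite direction: fix a near-optimal measure $\mu$ for the \emph{union} using the characterization
\[
\gamma_+(E) \approx \sup\bigl\{\mu(E):\ \mu\in\Sigma(E),\ \|\mathcal C_\mu\|_{L^2(\mu)\to L^2(\mu)}\le 1\bigr\},
\]
restrict $\mu$ to each $E_i$ to get $\mu_i=\mu|_{E_i}$, observe that unit linear growth and $L^2$-boundedness are preserved under restriction, so $\mu_i(E_i)\lesssim\gamma_+(E_i)$, and then sum over $i$. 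This is precisely the restriction scheme the paper itself reuses in Lemma \ref{lemmaBasic0} (5); your version would not close.
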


For (1) and (2), see \cite{Tol03} (also see Theorem 6.1 and Corollary 6.3 in \cite{Tol14}) and \cite{Tol04}.
(3) follows from Proposition 2.1 of \cite{Tol02} (also see \cite{Tol14} Proposition 4.16).
\smallskip

\begin{corollary}\label{CTExist}
Suppose that $\nu , \nu_j \in M_0(\mathbb C)$ for $j \ge 1$. The following statements are true.

(1) There exists $Z\subset \mathbb C$ with $\gamma(Z) = 0$ such that $\lim_{\epsilon\rightarrow 0} \mathcal C_\epsilon(\nu)(z)$ exists for $z\in Z^c$.

(2) For $\epsilon > 0$, there exists a Borel subset $F$ such that $\gamma (F^c) < \epsilon$ and for
 \[
 \ \sup_{z \in F} \mathcal C_* (\nu_j)(z) < \infty, ~ j\ge 1.
 \] 
\end{corollary}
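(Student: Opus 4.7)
The plan is to deduce both statements from Tolsa's weak-type bound (part (3) of Theorem \ref{TTolsa}), combined with the semiadditivity of $\gamma$ (part (2) of the same theorem).

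For (2), the argument is essentially direct. Given $\epsilon > 0$, for each $j \ge 1$ choose $a_j > 0$ so large that $C_T\|\nu_j\|/a_j < \epsilon/(A_T 2^j)$, and put $F_j := \{z : \mathcal{C}_*(\nu_j)(z) < a_j\}$; Tolsa's weak-type estimate then gives $\gamma(F_j^c) < \epsilon/(A_T 2^j)$. Setting $F := \bigcap_j F_j$, one has $F^c = \bigcup_j F_j^c$, and semiadditivity yields $\gamma(F^c) \le A_T\sum_j \gamma(F_j^c) < \epsilon$, while on $F$ each $\mathcal{C}_*(\nu_j)$ is bounded by $a_j$.

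For (1), my plan is to split $\nu = \mu_n + \rho_n$ in such a way that $\mathcal{C}(\mu_n)$ is continuous on $\mathbb C$ and $\|\rho_n\| \to 0$ as $n \to \infty$. Because $\mathcal{C}(\mu_n)$ is continuous, $\lim_{\epsilon \to 0}\mathcal{C}_\epsilon(\mu_n)(z)$ exists at every point; so if one sets $\Omega(\nu)(z) := \limsup_{\epsilon \to 0}\mathcal{C}_\epsilon(\nu)(z) - \liminf_{\epsilon \to 0}\mathcal{C}_\epsilon(\nu)(z)$, the decomposition forces $\Omega(\nu)(z) \le 2\mathcal{C}_*(\rho_n)(z)$. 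Tolsa's weak-type estimate then gives $\gamma(\{\Omega(\nu) \ge a\}) \le 2C_T\|\rho_n\|/a$, which tends to $0$ as $n \to \infty$, so $\gamma(\{\Omega(\nu) \ge a\}) = 0$ for every $a > 0$. Semiadditivity applied to $\{\Omega(\nu) > 0\} = \bigcup_k \{\Omega(\nu) \ge 1/k\}$ then gives $\gamma(\{\Omega(\nu) > 0\}) = 0$, and one may take $Z := \{\Omega(\nu) > 0\}$.

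The main obstacle is producing the splitting $\nu = \mu_n + \rho_n$. For the component of $\nu$ absolutely continuous with respect to $\mathcal L^2$, a smooth mollification of the density yields $\mu_n$ with $C_c^\infty$ density, whose Cauchy transform is continuous, and $\|\rho_n\|$ can be made arbitrarily small in $L^1(\mathcal L^2)$. The singular part is the delicate point: here one invokes the equivalence $\alpha \asymp \alpha_+$ from Theorem \ref{TTolsa}(1), which guarantees that any compact set of positive continuous analytic capacity carries a positive measure with continuous Cauchy transform. An iterative extraction of such pieces, together with the semiadditivity in Theorem \ref{TTolsa}(2), peels the singular part down to a remainder of arbitrarily small total variation, completing the decomposition.
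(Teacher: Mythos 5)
Your proof of part (2) is correct and is essentially identical to the paper's: apply Tolsa's weak-type bound (Theorem \ref{TTolsa}(3)) to pick thresholds $a_j$ making $\gamma(F_j^c)$ small, intersect, and invoke semiadditivity (Theorem \ref{TTolsa}(2)). Nothing to add there.

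Part (1) is where the proposal breaks down. The entire argument hinges on the claim that one can decompose $\nu = \mu_n + \rho_n$ with $\mathcal{C}(\mu_n)$ continuous on $\mathbb{C}$ and $\|\rho_n\| \to 0$ in total variation. This is false in general. Take $\nu = \delta_0$. Any $\mu$ with $\|\delta_0 - \mu\| < 1$ must put positive mass on $\{0\}$, and then $\mathcal{C}\mu$ has a pole at $0$ and cannot coincide $\mathcal{L}^2$-a.e.\ with a continuous function. So $\|\delta_0 - \mu_n\| \geq 1$ for every admissible $\mu_n$, and the decomposition does not exist. More broadly, the idea of ``peeling'' a singular positive measure $\sigma$ into pieces with continuous Cauchy transform does not work: the equivalence $\alpha \asymp \alpha_+$ produces some positive measure with continuous Cauchy transform supported on a given set of positive $\alpha$, but not a \emph{piece} of $\sigma$ (i.e.\ a measure absolutely continuous with respect to $\sigma$); the useful extraction result in this direction is Lemma \ref{lemmaBasic0}(3), which already presupposes that the ambient measure $\eta$ has bounded continuous Cauchy transform and so cannot serve as a bootstrap. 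There is also a secondary issue you glossed over: even granting the decomposition, the claim that continuity of $\mathcal{C}\mu_n$ forces $\lim_{\epsilon\to 0}\mathcal{C}_\epsilon\mu_n(z)$ to exist at \emph{every} $z$ is justified in the paper (via Lemma \ref{lemmaBasic0}(1)) only for positive $\mu_n$; for a complex $\mu_n$ one would need to split into positive parts or take $\mu_n$ with smooth density, which reintroduces the singular-part problem.

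The paper does not attempt such a decomposition. For (1) it cites the combination of Theorem 4.14 and Theorem 8.1 of \cite{Tol14} together with Theorem \ref{TTolsa}(1) (and \cite{To98}): the point is that if $Z$ is the set where the principal value fails to exist and $\gamma(Z) > 0$, then by the characterization of $\gamma$ in terms of measures of linear growth with $L^2$-bounded Cauchy operator one can find such a measure $\mu$ charging $Z$, and the principal-value existence theorem for the Cauchy transform of an arbitrary finite measure against such a $\mu$ (a Cotlar-type estimate without doubling) then yields the existence $\mu$-a.e., a contradiction. This is a genuinely different and substantially deeper argument than what you propose, and it is the one that actually works.
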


\begin{proof} 
(1) follows from combination Theorem 4.14 and Theorem 8.1 in \cite{Tol14} with Theorem \ref{TTolsa} (1) (also see \cite{To98}).

(2) is an application of Theorem \ref{TTolsa} (2) and (3). In fact, let $A_j = \{\mathcal C_*(\nu_j)(z) \le M_j\}$. By Theorem \ref{TTolsa} (3), we can select $M_j>0$ so that $\gamma(A_j^c) < \frac{\epsilon}{2^{j+1}C_T}$. Set $F = \cap_{j=1}^\infty A_j $. Then applying Theorem \ref{TTolsa} (2), we get
\[
 \ \gamma (F^c) \le C_T \sum_{j=1}^\infty \gamma(A_j^c) < \epsilon.
 \]  
\end{proof} 

Given three pairwise different points $x, y, z \in \mathbb C$, let $R(x, y, z)$ is the radius of the circumference passing through $x, y, z$ (with $R(x, y, z) = \infty$ if $x, y, z$ lie on a same line). For a finite positive measure $\eta$, we set
 \[
 \ c^2_\eta(x) = \int\int \dfrac{1}{R(x,y,z)^2} d\eta(y)d\eta(z),~ c^2(\eta ) = \int c^2_\eta (x)d\eta (x).
 \]

For $\nu\in M_0(\mathbb C)$, define
 $\Theta_\nu (\lambda ) := \lim_{\delta\rightarrow 0} \frac{|\nu |(B(\lambda , \delta ))}{\delta}$
if the limit exists.
The measure $\nu\in M_0(\mathbb C)$ with $\text{spt}(\nu) \subset E$ is $c_0$-linear growth if $|\nu|(B(\lambda, \delta)) \le c_0\delta$ for $\lambda\in \mathbb C$. Write $\nu\in \Sigma(E)$ when $c_0=1$. In addition, if 
$\Theta_\nu(\lambda) = 0$ for $\lambda\in \mathbb C$, we say $\nu\in \Sigma_0(E)$. The statement $A \lesssim B$ (resp. $A\gtrsim B$) means: there exists an absolute constant $C>0$ (resp. $c>0$), independent of $A$ and $B,$ such that $A\le CB$ (resp. $A\ge cB$). The following results are used throughout this paper, we list them here as a lemma.

\begin{lemma}\label{lemmaBasic0}
For $\eta \in M_0^+(\mathbb C)$, suppose $\|\mathcal C\eta\|_{L^\infty (\mathbb C)} \le 1$ and $\mathcal C\eta(z)$ is continuous. The following statements are true.

(1)  
 \begin{eqnarray}\label{lemmaBasic0Eq1}
 \ \eta(B(z, \epsilon))\le \epsilon \sup_{w\in B(z, \epsilon)}|\mathcal C\eta(w) - \mathcal C\eta(z)|
 \end{eqnarray}
and
 \begin{eqnarray}\label{lemmaBasic0Eq2}
 \ |\mathcal C_\epsilon (z) - \mathcal C(z)| \lesssim \sup_{w\in B(z, \epsilon)}|\mathcal C\eta(w) - \mathcal C\eta(z)|.
 \end{eqnarray} 

(2) $\eta(B)^\frac 32 \lesssim \|\eta\|^\frac 12\gamma (B)$ for all bounded Borel subsets $B$, consequently, $\eta(B) = 0$ if $\gamma (B) = 0$.

(3) If $\eta (E) > 0$, then there exists a function $f$ with $0 \le f(z) \le 1$ supported on $E$ such that $\int f(z) d \eta \gtrsim \eta (E)$, $\|\mathcal C(f\eta)\| \lesssim \left (\frac{\|\eta\|}{\eta(E)}\right )^{\frac 12 }$, and $\mathcal C(f\eta)(z)$ is continuous.

(4) Let $\nu\in M_0(\mathbb C)$. If $\sup_{z\in \text{spt}(\eta)}\mathcal C_*(\nu) (z) < \infty$, then  
 \[
 \ \int \mathcal C\eta (z) d \nu (z) = - \int \mathcal C\nu (z) d \eta (z).
 \]
 \end{lemma}

\begin{proof}

(1): Since $\int_{B(0, R)}\int \frac{1}{|z-w|}d\mu(z)d\mathcal L^2(w) < \infty$, we conclude that, by Fubini's theorem, 
 \[
 \ \int_{|z-w|=\epsilon}\int \frac{1}{|z-w|}d\mu(z)|dw| < \infty
 \]
for almost all $\epsilon$. Therefore, for such $\epsilon$, we get
 \[
 \ \begin{aligned}
 \ \eta (B(z, \epsilon)) = &- \dfrac{1}{2\pi i}\int_{|w-z|  =  \epsilon} \mathcal C\mu(w)dw  \\
 \ = &- \dfrac{1}{2\pi i}\int_{|w-z|=\epsilon} (\mathcal C\mu(w) - \mathcal C\mu(z))dw.
 \ \end{aligned}
 \]
\eqref{lemmaBasic0Eq1} follows. 

Let $\phi$ be a function on $\mathbb R$ supported on $[0,1]$ with $0 \le \phi(z) \le 2$ and $\int \phi(|z|) d\mathcal L^2(z) = 1$. Let $\phi_\epsilon(z) = \frac{1}{\epsilon^2} \phi (\frac{|z|}{\epsilon})$, $K_\epsilon = - \frac{1}{z} * \phi_\epsilon$, and $\tilde {\mathcal C}_\epsilon \eta = K_\epsilon * \eta$.
It is easy to show that
$K_\epsilon (z) = - \frac{1}{z}$ for $|z| \ge \epsilon$
and
$\|K_\epsilon \|_\infty \lesssim \frac{1}{\epsilon}$.
Hence,
 \[
 \ |\tilde {\mathcal C}_\epsilon \eta (z) - \mathcal C_\epsilon \eta (z)| = \left |\int_{|z-w | \le \epsilon} K_\epsilon(z - w) 
d\eta (w) \right |  \lesssim \dfrac{\eta (B(z, \epsilon))}{\epsilon}
 \]
and from \eqref{lemmaBasic0Eq1}, we have
\[
 \ \begin{aligned}
 \ & |\mathcal C_\epsilon \eta (\lambda) - \mathcal C\eta (\lambda)| \\
 \ \le & |\tilde {\mathcal C}_\epsilon \eta (\lambda) - \mathcal C_\epsilon \eta (\lambda)| + |\tilde {\mathcal C}_\epsilon \eta (\lambda) - \mathcal C\eta (\lambda)| \\
 \ \lesssim & \dfrac{\eta(B(\lambda, \epsilon))}{\epsilon} + \int |\mathcal C\eta (z) - \mathcal C\eta (\lambda )|\phi_\epsilon(\lambda - z)d\mathcal L^2(z).  
 \ \end{aligned}
 \]
\eqref{lemmaBasic0Eq2} follows.

(2): If $\eta (B) > 0$, then, by Proposition 3.3 in \cite{Tol14}, 
 \[
 \ c^2(\eta | _B) \le c^2(\eta ) \lesssim \|\eta\| \lesssim \dfrac{\|\eta\|}{\eta(B)} \eta(B).
 \]
Set $\eta_B = \left (\frac{\eta(B)}{\|\eta\|} \right )^\frac 12 \eta |_B$. Hence, combining Theorem 4.14 in \cite{Tol14} with Theorem \ref{TTolsa} (1), we conclude $\|\eta_B\| \lesssim \gamma(B)$. 

(3) 
Using Proposition 3.3 in \cite{Tol14}, we have $c^2(\eta) \le a\|\eta\|$, where $a>0$ is an absolute constant. Set $A = \{c^2_\eta(x) \ge \frac{2a\|\eta\|}{\eta(E)}\}$, then
 \[
 \ \eta (A) \le \dfrac{\eta(E)}{2a\|\eta\|} \int _A c^2_\eta(x) d\eta(x) \le \dfrac{\eta(E)}{2}. 
 \]
Set $\eta_0 = \eta |_{E \setminus A}$, then $\|\eta_0\| \ge \frac 12 \eta(E)$.  
Hence,
 \[
 \ c^2(\eta _0) \le \int_{E \setminus A} c^2_\eta(x) d\eta (x) \le \dfrac{2a\|\eta\|}{\eta(E)}\|\eta_0\|.
 \]
Let $\mu = \left (\frac{\eta(E)}{\|\eta\|}\right )^{\frac 12 }\eta_0$, then $\mu\in\Sigma_0(E)$ and 
\[
 \ c^2(\mu ) = \left (\frac{\eta(E)}{\|\eta\|}\right )^{\frac 32} c^2(\eta _0) \le 2a\|\mu\|.
 \]
Using the same proof as in Lemma 3.2 \cite{Tol04} for $\mu$,
 we conclude that there exists a compact subset $E_0 \subset E\setminus A$ such that $\mu (E_0) \ge \frac 12 \|\mu\|$, $\Theta_{\mu} (\lambda) = 0$ for $\lambda\in E_0$ and $c_{\mu}^2 (z) \lesssim 1$ for $z\in \mathbb C$. Using the same proof as in Lemma 3.4 of \cite{Tol04}, we find a compact subset $F\subset E_0$, $\mu(F) \ge \frac 14 \|\mu\|$,  and a function $f$ supported on $F$ with $0\le f(z) \le 1$, and $\int f(x) d\mu(x) \gtrsim \mu(F)$ such that $\|\mathcal C (f\mu)\| \lesssim 1$ and $\mathcal C (f\mu)$ is continuous.

(4): Clearly,
 \[
 \ \int \mathcal C_\epsilon \eta (z) d \nu (z) = - \int \mathcal C_\epsilon \nu (z) d \eta (z).
 \]
(4) follows from Corollary \ref{CTExist} (1), \eqref{lemmaBasic0Eq2}, (2),  and the Lebesgue dominated convergence theorem.

\end{proof}

\bigskip

\section{\textbf{Continuous Cauchy transform and $L(\mathcal C\eta,K)$}}
\smallskip

From now on, we assume $K\subset \mathbb C$ is a compact subset and $U\subset K$ is an open subset. 
For $\delta > 0,$ set $B(\lambda , \delta ) = \{z: |z - \lambda| < \delta \}$.
For a compactly supported smooth function $\varphi$, the localization operator
$T_\varphi$ is defined by
 \begin{eqnarray}\label{VLODefinition}
 \ (T_\varphi f)(\lambda) = \dfrac{1}{\pi}\int \dfrac{f(z) - f(\lambda)}{z - \lambda} \bar\partial \varphi (z) d\mathcal L^2(z),
 \end{eqnarray}
where $f\in L^\infty_c (\mathbb C)$. It is well known that $T_\varphi R(K) \subset R(K)$ and $T_\varphi A(K,U) \subset A(K,U)$. For a continuous function $f$ on $\mathbb C_\infty,$ let
 \[
 \ \omega (f, \delta) = \sup_{z_1,z_2\in B(\lambda, \delta), \lambda \in \mathbb C}|f(z_1) - f(z_2) |. 
 \]
The following
norm estimation is well known for a continuous function $f$:
\begin{eqnarray}\label{TBounded1}
 \ \| T_\varphi f\| \le  8\omega (f, \delta) \delta \|\bar\partial \varphi\|
 \end{eqnarray}
for a smooth function $\varphi$ supported in $B(\lambda,\delta)$.  
For $\nu\in M_0(\mathbb C)$, we see that
\begin{eqnarray}\label{TIdentity}
 \ T_\varphi(\mathcal C \nu)(z) = \mathcal C (\varphi\nu)(z), ~ \mathcal L^2-a.a. 
 \end{eqnarray}

For $\phi\in C(K)$, let $L(\phi, K)$ be the closure of 
 \[
 \ \{T_\varphi \phi:~\varphi \text{ is a smooth function with compact support} \}
 \]
in $C(K)$. Clearly, $L(\phi, K) \subset A(K,U)$ if $\phi \in A(K,U)$. 

For a finite positive measure $\eta$ with $\text{spt}(\eta) \subset \mathbb C\setminus U$ 
and $\mathcal C\eta \in A(K,U),$ Define
 \begin{eqnarray}\label{AlphaEtaPlusDefinition}
 \ \alpha _{\eta +}(B(\lambda, \delta)) = \sup \|\mu\|,
 \end{eqnarray}
where the supremum is taken over all finite positive measures $\mu$, $\text{spt}(\mu) \subset B(\lambda, \delta),$ $\mu$ is
absolutely continuous with respect to $\eta,$ $\mathcal C\mu$ is continuous on $\mathbb C$, and $\|\mathcal C\mu\|_{\mathbb C} \le 1$. We mainly prove the following theorem in this section.

\begin{theorem}\label{MTheoremIntro1}
Let $\eta\in M_0^+(\mathbb C)$ and $\text{spt}(\eta) \subset \mathbb C \setminus U$ such that $\mathcal C\eta$ is continuous on $\mathbb C$. Suppose for $\lambda \in \mathbb C$ and $0 < \delta < \delta_0$ ($\delta_0$ is fixed), 
\begin{eqnarray}\label{MTheoremIntro1Eq}
\ \alpha (B(\lambda, \delta)\setminus U) \le C(\alpha _{\eta +}(B(\lambda, k\delta)) + \gamma (B(\lambda, k\delta)\setminus K))
\end{eqnarray}
for some constants $C>0$ and $k\ge 1$. Then the following properties hold. 
\newline
(1) $L(\mathcal C\eta, K) + R(K)$ is uniformly dense in $A(K,U)$.
\newline
(2) In addition, if $\text{spt}(\eta)$ meets all connected components of $\mathbb C\setminus K$, then $L(\mathcal C\eta, K) = A(K,U)$.   
\end{theorem}

We process our proof of Theorem \ref{MTheoremIntro1} in the remaining section.
In fact, (1) follows from Corollary \ref{MCorollary2} and Corollary
\ref{MCorollary3}. (2) follows from Proposition \ref{MProposition1}.

\smallskip

For $\delta > 0$, let $\{\varphi_{ij},S_{ij}\}_{-\infty < i,j < \infty}$ be a smooth partition of unity, where the length of the square $S_{ij}$ is $\delta$, the support of $\varphi_{ij}$ is in $2S_{ij}$, $0 \le \varphi_{ij} \le 1$, $s_{ij}$, where $Re(s_{ij}) = (i+\frac 12)\delta$ and $Im(s_{ij}) = (j+\frac 12)\delta$, is the center of $S_{ij}$,
 \begin{eqnarray}\label{PartitionUnity}
 \ \|\bar\partial \varphi_{ij} \| \lesssim \frac{1}{\delta},~ \sum \varphi_{ij} = 1,
 \end{eqnarray}
 and 
 \[
 \ \bigcup_{i,j=-\infty}^\infty S_{ij} = \mathbb C,~ Int(S_{ij})\cap Int(S_{i_1j_1}) = \emptyset
 \]
for $(i,j) \ne (i_1,j_1)$.

We provide the following example to show that it is straightforward to construct a finite positive measure $\eta$ with $\text{spt}(\eta) \subset \mathbb C \setminus U$ satisfying the assumptions of Theorem \ref{MTheoremIntro1}. 

\begin{example}
From Theorem \ref{TTolsa} (1), we find $\eta_{ij}^n \in M_0^+(\mathbb C)$ with 
 \[
 \ \text{spt}(\eta_{ij}^n) \subset S_{ij}\cap (K\setminus U),~ \|\eta_{ij}^n\| \ge c\alpha(S_{ij}\cap (K\setminus U)),~ \|\mathcal C(\eta_{ij}^n)\|_{L^\infty (\mathbb C)} \le 1,
 \]
and $\mathcal C(\eta_{ij}^n)$ is continuous on $\mathbb C$, where $S_{ij}$ is defined as in \eqref{PartitionUnity} with $\delta = \frac{1}{2^n}$. 
Let $M_n$ be the number of squares $S_{ij}$ with $\alpha(S_{ij}\cap (K\setminus U)) > 0$. We define
 \[
 \ \eta_n = \dfrac{1}{M_n}\sum_{\alpha(S_{ij}\cap (K\setminus U)) > 0}\eta_{ij}^n.
 \]
Set $\eta = \sum_{n=1}^\infty \frac{1}{2^n}\eta_n.$ By Theorem \ref{TTolsa} (2),
\[
\begin{aligned}
\ \alpha (S_{ij}\setminus U) \le & C_T(\alpha (S_{ij}\cap (K\setminus U)) + \alpha (S_{ij}\setminus K))\\
\ \le & C_T(\frac 1c \|\eta_{ij}^n\| + \gamma (S_{ij}\setminus K))\\
\ \le & C( \alpha _{\eta+}(B(s_{ij},\frac{\sqrt 2}{2}\delta)) + \gamma (S_{ij}\setminus K)).
\end{aligned}
\]
Then it is easy to verify $\eta$ satisfies \eqref{MTheoremIntro1Eq}. Theorem \ref{MTheoremIntro1} (1) implies that $L(\mathcal C\eta, K) + R(K)$ is uniformly dense in $A(K,U)$.
\end{example}
\smallskip

The assumption \eqref{MTheoremIntro1Eq} suggests if $\text{spt}(\eta) \cap (K\setminus U)$ is \quotes{big} compared to $K\setminus U$, then $L(\mathcal C\eta, K) + R(K)$ is uniformly dense in $A(K,U)$.

Let $\eta \in M_0^+(\mathbb C)$ such that $\mathcal C\eta$ is continuous on $\mathbb C$. Define
\begin{eqnarray}\label{AlphaEtaDefinition}
 \ \alpha_\eta (B(\lambda, \delta)) = \sup \|\varphi \eta\|
 \end{eqnarray}
where the supremum is taken over all complex-valued smooth functions $\varphi$ supported in $B(\lambda, \delta)$ and $|\mathcal C(\varphi \eta)(z)| \le 1$ for all $z \in \mathbb C$. Notice that, by \eqref{TIdentity}, $\mathcal C(\varphi\eta)$ is continuous on $\mathbb C$. 

Let $g$ be an analytic function outside the disc $B(a, \delta)$ satisfying the condition
$g(\infty ) = 0.$ We consider the Laurent expansion of $g$ centered at $a$,
 \[
 \ g(z) = \sum_{m=1}^\infty \dfrac{c_m(g,a)}{(z-a)^m}.
 \]
We define $c_1 (g) = c_1(g, a)$.
$c_1 (g)$ does not depend on the choice of $a$, while $c_2 (g,a)$ depends on $a$. However, if $c_1 (g) = 0$, then $c_2 (g,a)$ does not depend on $a$, in this case, we denote $c_2 (g) = c_2 (g,a)$.
\smallskip

The proof of the following theorem relies on modified Vitushkin approximation scheme by P. V. Paramonov in \cite{p95}.

\begin{theorem}\label{MLemma2}
Let $\eta\in M_0^+(\mathbb C)$ such that $\mathcal C\eta$ is continuous on $\mathbb C$. Then for a continuous  function $F$ on $\mathbb C,$ the following conditions are equivalent 
\newline
(1) $F\in L(\mathcal C\eta, K)$.
\newline
(2) There exists a positive function $\omega_F(\delta)$ such that $\omega_F(\delta) \rightarrow 0$ as $\delta \rightarrow 0$ and for a smooth function $\psi$ supported in $B(\lambda, \delta)$, we have
 \begin{eqnarray}\label{MLemma2Eq1}
 \ \left | \int F(z) \bar \partial \psi (z) d \mathcal L^2 (z) \right | \lesssim  \omega_F(\delta) \delta \|\bar \partial \psi\| \alpha_\eta (B(\lambda, \delta)).  
 \end{eqnarray} 
\end{theorem}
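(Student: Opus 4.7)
The plan is to prove both directions of the equivalence. The forward implication is a short argument in which the localization operator $T_\psi$ itself supplies the extremal function; the reverse implication is the content of P.\,V. Paramonov's refinement of the Vitushkin approximation scheme applied to the class $A_\eta$.

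For $(1)\Rightarrow(2)$, given $F\in A_\eta$ and a smooth $\psi$ supported in $B(\lambda,\delta)$, I would set $g:=T_\psi F$. By \eqref{TLInclusion} we have $g\in A_\eta$, and by \eqref{TBounded1},
$\|g\|\le 8\,\omega(F,\delta)\,\delta\,\|\bar\partial\psi\|.$
Rewriting $T_\psi F=\psi F-\frac{1}{\pi}\mathcal C(F\bar\partial\psi\,\mathcal L^2)$ shows $g$ is continuous on $\mathbb C$, analytic on $\mathbb C_\infty\setminus B(\lambda,\delta)$, and vanishes at infinity. For $|\lambda|$ large ($\psi(\lambda)=0$), expanding $1/(z-\lambda)$ in powers of $1/\lambda$ yields $g'(\infty)=c_1(g)=-\frac{1}{\pi}\int F\,\bar\partial\psi\,d\mathcal L^2$. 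Since $g/\|g\|$ is admissible in the definition of $\alpha_{A_\eta}(B(\lambda,\delta))$, one gets $|c_1(g)|\le \|g\|\,\alpha_{A_\eta}(B(\lambda,\delta))$. Combining these gives \eqref{MLemma2Eq1} with $\omega_F(\delta)=8\pi\,\omega(F,\delta)$, which tends to $0$ because $F\in C(\mathbb C_\infty)$ is uniformly continuous.

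For $(2)\Rightarrow(1)$, fix $\delta>0$ and use the partition of unity $\{\varphi_{ij},S_{ij}\}$ in \eqref{PartitionUnity} to write the Vitushkin decomposition $F=\sum_{i,j}T_{\varphi_{ij}}F$. Each piece $T_{\varphi_{ij}}F$ is continuous on $\mathbb C$, analytic off $2S_{ij}$, has uniform norm $\le C\,\omega(F,\delta)$, and its Laurent coefficient at infinity is $c_1(T_{\varphi_{ij}}F)=-\frac{1}{\pi}\int F\,\bar\partial\varphi_{ij}\,d\mathcal L^2$. Hypothesis (2) applied with $\psi=\varphi_{ij}$ yields $|c_1(T_{\varphi_{ij}}F)|\le C\,\omega_F(\delta)\,\alpha_{A_\eta}(B(s_{ij},c\delta))$. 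When this capacity vanishes set $h_{ij}:=0$; otherwise choose $f_{ij}$ from the definition of $\alpha_{A_\eta}(B(s_{ij},c\delta))$ with $|f'_{ij}(\infty)|\ge \tfrac12\alpha_{A_\eta}(B(s_{ij},c\delta))$ and put $h_{ij}:=(c_1(T_{\varphi_{ij}}F)/f'_{ij}(\infty))\,f_{ij}$. Then $h_{ij}\big|_{\mathrm{spt}(\eta)}\in A_\eta$, $\|h_{ij}\|\le 2C\,\omega_F(\delta)$, and $h_{ij}$ matches $c_1$ of $T_{\varphi_{ij}}F$, so the residual $r_{ij}:=T_{\varphi_{ij}}F-h_{ij}$ has $c_1(r_{ij})=0$ and decays like $\delta^2/|z-s_{ij}|^2$ away from a dilate of $S_{ij}$. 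Paramonov's summation estimate, together with Tolsa's semiadditivity \eqref{Semiadditive}, then yields $\|\sum_{i,j}r_{ij}\|_{\mathbb C}\le C\,\omega_F(\delta)$, so that $F_\delta:=\sum_{i,j}h_{ij}\in A_\eta$ approximates $F$ uniformly with error $O(\omega_F(\delta))\to 0$.

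The main obstacle is the summation step in $(2)\Rightarrow(1)$: although the residuals $r_{ij}$ have vanishing $c_1$, the raw pointwise decay $\delta^2/|z-s_{ij}|^{2}$ is only logarithmically summable over the grid, and Paramonov's refinement is needed to extract the required uniform bound. It exploits the $O(1/\delta)$ control on $\|\bar\partial\varphi_{ij}\|$ in \eqref{PartitionUnity}, the vanishing of higher Laurent moments on ``non-exceptional'' cells, and the semiadditivity and $\alpha\approx\alpha_+$ equivalence from Tolsa's theorem to aggregate the capacities $\alpha_{A_\eta}(B(s_{ij},c\delta))$ across overlapping supports. A secondary technical point is to ensure that the pointwise choices $f_{ij}$ can be made so that $h_{ij}|_{\mathrm{spt}(\eta)}$ genuinely lies in $A_\eta$ and that $\sum_{i,j} h_{ij}$ converges in $C(\mathrm{spt}(\eta))$, so that the limit inherits membership in the closed subalgebra $A_\eta$.
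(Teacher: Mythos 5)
Your $(1)\Rightarrow(2)$ argument is the paper's argument and is correct. The gap is in $(2)\Rightarrow(1)$, and it is substantive. You correctly note that matching only $c_1$ gives residuals $r_{ij}$ decaying like $\omega_F(\delta)\,\delta^2/|z-s_{ij}|^2$, which is not summable over the grid, and that Paramonov's refinement is required; but your sketch omits the two estimates that actually make that refinement run and that the paper spends most of its proof establishing. First, hypothesis (2) must be applied a second time, with $\psi=(z-s_{ij})\varphi_{ij}(z)$, to get the second Laurent coefficient bound $|c_2(f_{ij},s_{ij})|\le C\,\omega_F(\delta)\,\delta\,\alpha_{ij}$, where $\alpha_{ij}=\alpha_{A_\eta}(B(s_{ij},\sqrt 2\delta))$; your proposal uses (2) only to bound $c_1(f_{ij})$. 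Second, one needs a matching bound $|c_2(F_{ij},s_{ij})|\le C\delta\,\alpha_{ij}$ for the extremal building block $F_{ij}$; the paper gets it by observing that $(z-s_{ij})F_{ij}-c_1(F_{ij})\in A_\eta$ (since $(z-s_{ij})\mathcal C(\varphi\eta)-c_1(\mathcal C(\varphi\eta))=\mathcal C((z-s_{ij})\varphi\eta)\in A_\eta$) and then invoking the already-proved $(1)\Rightarrow(2)$ direction. Together these give $|g_{ij}(z)|\le C\omega(\delta)\bigl(\delta\alpha_{ij}/|z-s_{ij}|^2+\delta^3/|z-s_{ij}|^3\bigr)$. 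The $\alpha_{ij}$ weight on the $1/|z|^2$ term is the feature that makes the aggregation work, and it is absent from your residuals.

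Your description of Paramonov's mechanism is also inaccurate. It is not that ``higher Laurent moments vanish on non-exceptional cells''; every residual $g_{ij}$ has $c_1=0$ and generically nonzero $c_2$. What Paramonov does is partition each row of indices $I_i$ into complete groups $I_{il}$ (two subclusters of aggregate capacity $\approx\delta$ separated by distance $\approx\delta$) plus at most one incomplete group, and for each complete group $I$ produce a single correction $h_I$ — a linear combination of the $F_{ij}/c_1(F_{ij})$ — with $c_1(h_I)=0$ and $c_2(h_I)=c_2(g_I)=\sum_{j\in I}c_2(g_{ij},s_{ij})$, so the group residual $g_I-h_I$ decays like $1/|z|^3$. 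These corrections must then be included in the approximant: the $A_\eta$-element one converges to is $f_\delta=\sum_{i,l}\sum_{j\in I_{il}}f_{ij}^*+\sum_i\sum_{l<l_i}h_{I_{il}}$, not simply $\sum_{i,j}h_{ij}$ as you write. Semiadditivity of $\alpha$ appears in the paper, but it is not what drives the summation estimate; the group construction together with the $\alpha_{ij}$-weighted decay is.
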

\smallskip

\begin{proof}
(1)$\Rightarrow$(2): For a smooth function $\psi$ supported in $B(\lambda, \delta)$ and a smooth function $\varphi$ with compact support, we have, by \eqref{TBounded1},
 \[
 \ \begin{aligned}
 \ \dfrac{1}{\pi}†\left | \int \mathcal C(\varphi \eta)(z) \bar \partial \psi (z) d \mathcal L^2 (z) \right | = &\|\psi \varphi \eta \| \\
 \ \le & \|\mathcal C(\psi\varphi \eta)\| \alpha_\eta (B(\lambda, \delta)) \\
 \ = & \|T_\psi\mathcal C(\varphi \eta)\| \alpha_\eta (B(\lambda, \delta)) \\
 \ \lesssim & \omega(\mathcal C(\varphi \eta), \delta) \delta \|\bar \partial \psi\| \alpha_\eta (B(\lambda, \delta)).
 \ \end{aligned}
 \]
Hence, \eqref{MLemma2Eq1} follows for $\omega_F(\delta) = \omega(F, \delta)$.

 (2)$\Rightarrow$(1):
The modified Vitushkin approximation scheme by P. V. Paramonov in \cite{p95} can be applied in our case. We list the key steps in \cite{p95} below.

Let $f_{ij} = T_{\varphi_{ij}}F$, then, by \eqref{TBounded1}, we have 
\[
\|f_{ij}\| \lesssim \omega (F, \delta)
 \]
and
\begin{eqnarray}\label{MLemma2Eq3}
 \ F = \sum_{ij} f_{ij} = \sum_{2S_{ij} \cap spt(\eta) \ne \emptyset} f_{ij}.
 \end{eqnarray}
For $2S_{ij} \cap \text{spt}(\eta) \ne \emptyset$, by \eqref{MLemma2Eq1}, (2.7) in \cite{p95} becomes
 \begin{eqnarray}\label{MLemma2Eq4}
 \ |c_1 (f_{ij})| = \dfrac{1}{\pi}\left | \int F(z) \bar \partial \varphi _{ij}(z) d\mathcal L^2 (z) \right | \lesssim \omega_F(\delta) \alpha_\eta (B(s_{ij}, \sqrt{2}\delta)).
 \end{eqnarray}
Applying \eqref{MLemma2Eq1} for $\psi = (z - s_{ij})\varphi _{ij}(z)$, (2.8) in \cite{p95} becomes
 \begin{eqnarray}\label{MLemma2Eq5}
 \ \begin{aligned}
 \ |c_2 (f_{ij}, s_{ij})|\lesssim & \omega_F(\delta) \delta \|(z - s_{ij}) \bar \partial \varphi _{ij}(z)\|\alpha_\eta (B(s_{ij}, \sqrt{2}\delta)) \\
 \ \lesssim & \omega_F(\delta) \delta \alpha_\eta (B(s_{ij}, \sqrt{2}\delta)).
 \ \end{aligned}
 \end{eqnarray}

Set $\alpha_{ij} = \alpha_\eta (B(s_{ij}, \sqrt{2}\delta))$. For a smooth function $\varphi$ with compact support in $B(s_{ij}, \sqrt{2}\delta)$ and $g = \mathcal C(\varphi\eta)\in L(\mathcal C\eta, K)$, we have (resp. (2.10) in \cite{p95})
 \[
 \ |c_1(g)| \le \|g\| \alpha_{ij} \lesssim \|g\|\delta
 \]
and
 \begin{eqnarray}\label{MLemma2Eq6}
 \ \begin{aligned}
 \ |c_2(g, s_{ij})| = & |c_1(\mathcal C((z - s_{ij})\varphi \eta))| \\
 \ \le &\|\mathcal C((z - s_{ij})\varphi\eta)\| \alpha_{ij} \\
 \ \lesssim & (\delta \|\mathcal C(\varphi\eta)\| +  |c_1(g)| )\alpha_{ij} \\
 \ \lesssim & \delta \|g\|\alpha_{ij}.
 \ \end{aligned}
 \end{eqnarray} 
 By the definition of $\alpha_\eta$, we find $f_{ij}^* = \frac{c_1 (f_{ij})}{c_1 (\mathcal C(\phi_{ij}\eta))}\mathcal C(\phi_{ij}\eta)$, where $\phi_{ij}$ is a smooth function supported in $B(s_{ij}, \sqrt{2}\delta)$, $\|\mathcal C(\phi_{ij}\eta)\| \le 1$, and $\|\phi_{ij}\eta\| \ge \frac 12 \alpha_{ij}$. Then $c_1(f_{ij}^*) = c_1(f_{ij})$. Set $g_{ij} = f_{ij} - f_{ij}^*$, by \eqref{MLemma2Eq6}, we get
 \[
 \ \|g_{ij}\| \lesssim \omega(\delta), ~ c_1(g_{ij}) = 0, ~~ |c_2(g_{ij}, s_{ij})| \lesssim \omega(\delta)\delta \alpha_{ij},    
 \]
where $\omega(\delta) = \max (\omega_F(\delta), \omega(F,\delta))$. Therefore, (2.16) in \cite{p95} holds. That is, for $|z - s_{ij}| > 3k_1\delta$ ($k_1 \ge 3$ is a fixed integer),
\[
 \ \left | g_{ij}(z)\right | \lesssim \omega(\delta)\left ( \dfrac{\delta \alpha _{ij}}{|z - s_{ij}|^2} + \dfrac{\delta^3}{|z - s_{ij}|^3} \right ).
 \]

The standard Vitushkin approximation scheme requires to that $f_{ij} - f^*_{ij}$ has triple zeros at $\infty$, which requires to estimate both $c_1 (f_{ij})$ and $c_2 (f_{ij}, s_{ij})$. The main idea of P. V. Paramonov is that one does not actually need to estimate
each coefficient $c_2 (f_{ij}, s_{ij})$.  It suffices to do away (with appropriate estimates) with the sum of coefficients $\sum_{j\in I_{is}} c_2(f_{ij}, s_{ij})$ for a special partition $\{I_{is}\}$ into non-intersecting groups $I_{is}$.   

Set $min_i = \min\{j:~ 2S_{ij} \cap \text{spt}(\eta) \ne \emptyset \}$ and $max_i = \max\{j:~ 2S_{ij} \cap \text{spt}(\eta) \ne \emptyset \}$. Let $I_i = \{min_i, min_i+1,..., max_i\}$. 
A concept of a complete group of indices $I \subset I_i$ is defined as in Definition 2.6 in \cite{p95}. Basically, $I$ consists of two subgroups $I_1$ and $I_2$ satisfying $\sum_{j\in I_1}\alpha_{ij} \approx \delta$, $\sum_{j\in I_2}\alpha_{ij} \approx \delta$, and $dist(I_1, I_2)\approx \delta$.

In \cite{p95}, $I_i$ is partitioned into complete groups $I_{i1},...,I_{il_i-1}$ (this family can even be empty) and an incomplete group $I_{il_i} = I_i\setminus(I_{i1}\cup...\cup I_{il_i-1})$ (clearly, there is at most one incomplete group for each $i$).

Let $I=I_{il}$ be a group, define
 \[
 \ g_I = \sum_{j\in I} g_{ij}, ~ c_1(g_I) = \sum_{j\in I} c_1(g_{ij}),~c_2 (g_I) = \sum_{j\in I} c_2 (g_{ij},s_{ij}).
 \]
Then $c_1(g_I) = 0$. For a complete group $I$, Lemma 2.7 in \cite{p95} provides key estimates for $c_2 (g_I)$. In our case, we find $h_{ij} = \frac{\mathcal C(\phi_{ij}\eta)}{\|\mathcal C(\phi_{ij}\eta)\|}$ ($h_j$ in the proof of Lemma 2.7 in \cite{p95}) and $h_I$ ($h_\Gamma$ in Lemma 2.7 of \cite{p95}) as certain linear combinations of $h_{ij}$ such that
 \[
 \  \|h_I\| \lesssim \omega(\delta),~ c_1(h_I) = 0, ~ c_2(h_I) = c_2(g_I).
 \]

We rewrite \eqref{MLemma2Eq3} as the following
 \[
 \ F = \sum_{i} \sum_{l = 1}^{l_i-1} (g_{I_{il}} - h_{I_{il}}) + \sum_{i} \sum_{j\in I_{il_i}} g_{ij} + f_{\delta}
 \]
where
 \[
 \ f_{\delta} = \sum_{i} \sum_{l = 1}^{l_i} \sum_{j\in I_{il}} f_{ij}^* + \sum_{i} \sum_{l = 1}^{l_i-1} h_{I_{il}} \in L(\mathcal C\eta, K).
 \]

The following is proved in \cite{p95}:
 \begin{eqnarray}\label{P95Estimate}
 \ \sum_{i} \sum_{l = 1}^{l_i-1} |g_{I_{il}} - h_{I_{il}}| + \sum_{i} |g_{I_{il_i}}| \lesssim \omega(\delta) .
 \end{eqnarray}
Therefore, $f_{\delta}$ uniformly converges to $F$ . This completes the proof.  
\end{proof}
\smallskip

\begin{corollary}\label{MCorollary1}
Let $\eta\in M_0^+(\mathbb C)$ such that $\mathcal C\eta$ is continuous on $\mathbb C$. Suppose that $\mathcal C(f\eta)$ is continuous on $\mathbb C$ for $f\in L^\infty (\eta)$ with $\|f\|_{L^\infty (\eta)} \le 1$. Then $\mathcal C(f\eta)\in L(\mathcal C\eta, K)$. 
\end{corollary}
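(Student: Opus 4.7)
My approach is to invoke the characterization in Theorem~\ref{MLemma2} by verifying condition~(2) for $F=\mathcal C(f\eta)$. Since $\bar\partial F = -\pi f\eta$ in the sense of distributions (equation~\eqref{CTDistributionEq}), for any smooth $\psi$ with $\operatorname{spt}\psi\subset B(\lambda,\delta)$ the Cauchy--Pompeiu formula yields
\[
 \ \int F(z)\,\bar\partial\psi(z)\,d\mathcal L^2(z)=\pi\int\psi f\,d\eta .
\]
Using $|f|\le 1$ and the elementary estimate $\|\psi\|_\infty\le 4\delta\|\bar\partial\psi\|$ (another application of Cauchy--Pompeiu, since $\operatorname{spt}\psi\subset B(\lambda,\delta)$), this integral is dominated by $4\pi\delta\|\bar\partial\psi\|\,\eta(B(\lambda,\delta))$.

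The next step is to bound $\eta(B(\lambda,\delta))$ by $\alpha_{A_\eta}$ on a slightly dilated ball, with a factor that tends to $0$ as $\delta\to 0$. I would take a smooth cutoff $\phi$ with $\phi\equiv 1$ on $B(\lambda,\delta)$, $\operatorname{spt}\phi\subset B(\lambda,2\delta)$, and $\|\bar\partial\phi\|\le C/\delta$. Then by \eqref{TIdentity}, $\mathcal C(\phi\eta)=T_\phi(\mathcal C\eta)$ lies in $A_\eta$ by definition, is analytic off $\overline{B(\lambda,2\delta)}$, is continuous on $\mathbb C$, has leading coefficient $c_1(\mathcal C(\phi\eta))=\int\phi\,d\eta\ge\eta(B(\lambda,\delta))$, and by \eqref{TBounded1} satisfies $\|\mathcal C(\phi\eta)\|\le C\,\omega(\mathcal C\eta,2\delta)$. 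Normalizing produces an admissible competitor for $\alpha_{A_\eta}(B(\lambda,2\delta))$, whence $\eta(B(\lambda,\delta))\le C'\,\omega(\mathcal C\eta,2\delta)\,\alpha_{A_\eta}(B(\lambda,2\delta))$. Combined with the previous display this is condition~(2) of Theorem~\ref{MLemma2} with $\omega_F(\delta):=\omega(\mathcal C\eta,2\delta)\to 0$, except that the ball on the right is $B(\lambda,2\delta)$ rather than $B(\lambda,\delta)$; as discussed below, this is harmless, giving $F\in A_\eta$.

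To obtain the subalgebra conclusion it suffices, by uniform closedness of $A_\eta$, to show that products of generators lie in $A_\eta$. For smooth compactly supported $\varphi_1,\varphi_2$, the distributional identity in Lemma~\ref{lemmaBasic0}(6), whose proof extends verbatim to signed/complex-valued smooth densities against $\eta$, gives
\[
 \ \bar\partial\bigl(\mathcal C(\varphi_1\eta)\,\mathcal C(\varphi_2\eta)\bigr)=-\pi\bigl(\mathcal C(\varphi_1\eta)\varphi_2+\mathcal C(\varphi_2\eta)\varphi_1\bigr)\eta.
\]
Since both sides vanish at $\infty$, the product equals $\mathcal C(g\eta)$ with $g=\mathcal C(\varphi_1\eta)\varphi_2+\mathcal C(\varphi_2\eta)\varphi_1$, a bounded continuous function, and $\mathcal C(g\eta)$ is continuous (being that product). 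Applying the first part of the corollary to $g/\|g\|_\infty$ places $\mathcal C(\varphi_1\eta)\mathcal C(\varphi_2\eta)\in A_\eta$, and uniform closure extends the conclusion to all of $A_\eta$.

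The principal technical obstacle I anticipate is the ball-size discrepancy in the second paragraph: the natural test function for $\alpha_{A_\eta}$ is supported on $B(\lambda,2\delta)$, whereas Theorem~\ref{MLemma2}(2) is phrased with $B(\lambda,\delta)$ on the right. The resolution is that the implication $(2)\Rightarrow(1)$ of that theorem, relying on the Paramonov--Vitushkin scheme, already applies the hypothesis to balls of radius $\sqrt 2\,\delta$ around centers $s_{ij}$ of squares of side $\delta$ (see \eqref{MLemma2Eq4}--\eqref{MLemma2Eq5}); thus replacing $\alpha_{A_\eta}(B(\lambda,\delta))$ by $\alpha_{A_\eta}(B(\lambda,K\delta))$ for any fixed $K\ge 1$ is absorbed by refining the partition scale, and does not affect the conclusion.
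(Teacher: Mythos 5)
Your proposal is correct and takes essentially the same route as the paper: for the first assertion, both verify condition~(2) of Theorem~\ref{MLemma2} for $F=\mathcal C(f\eta)$ via the identity $\tfrac1\pi\int F\,\bar\partial\psi\,d\mathcal L^2=\int\psi f\,d\eta$, and for the subalgebra assertion both invoke Lemma~\ref{lemmaBasic0}(6) with $\eta_i=\varphi_i\eta$ and then apply the first part to the resulting density. The only difference is in the intermediate estimate: the paper bounds $\bigl|\int\psi f\,d\eta\bigr|$ by $\|\psi\eta\|$ and feeds $T_\psi\mathcal C\eta$ itself into the definition of $\alpha_{A_\eta}(B(\lambda,\delta))$, whereas you separate out $\|\psi\|_\infty$ and $\eta(B(\lambda,\delta))$ and use a nonnegative cutoff $\phi$, which introduces the dilated ball $B(\lambda,2\delta)$; your remark that this dilation is absorbed harmlessly in the Paramonov scheme is correct, and in fact your version is the more careful one, since the paper's step implicitly requires $\psi\ge 0$ (so that $\int|\psi|\,d\eta$ equals the leading Laurent coefficient of $T_\psi\mathcal C\eta$), which need not hold for the test functions $(z-s_{ij})\varphi_{ij}$ that Theorem~\ref{MLemma2} actually uses.
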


\begin{proof}
For a smooth function $\psi$ supported in $B(\lambda, \delta)$, from \eqref{CTDistributionEq}, \eqref{TBounded1}, and \eqref{TIdentity}, we have
 \[
 \ \begin{aligned}
 \ \dfrac{1}{\pi}\left | \int \mathcal C(f\eta)(z) \bar \partial \psi (z) d \mathcal L^2 (z) \right | = &\|\psi f \eta \| \\
 \ \le &\|\psi \eta \| \\
 \ \le & \|\mathcal C(\psi\eta)\| \alpha_\eta (B(\lambda, \delta)) \\
 \ = & \|T_\psi\mathcal C(\eta)\| \alpha_\eta (B(\lambda, \delta)) \\
 \ \lesssim & \omega(\mathcal C\eta, \delta) \delta \|\bar \partial \psi\| \alpha_\eta (B(\lambda, \delta)).
 \ \end{aligned}
 \]
Hence, \eqref{MLemma2Eq1} holds and the corollary follows from Theorem \ref{MLemma2}.
\end{proof}
\smallskip

\begin{corollary}\label{MCorollary2}
Let $\eta\in M_0^+(\mathbb C)$ with $\text{spt}(\eta) \subset\mathbb C \setminus U$ 
such that $\mathcal C\eta$ is continuous on $\mathbb C$. If $\lambda\in \mathbb C$ 
and $0 < \delta < \delta_0$ (for some $\delta_0$)
\[
\ \alpha (B(\lambda, \delta)\setminus U) \le  C( \alpha _\eta (B(\lambda, k\delta)) + \gamma (B(\lambda, k\delta)\setminus K))
\]
for some $k\ge 1$, where $C>0$ is an absolute constant, then $L(\mathcal C\eta, K) + R(K)$ is uniformly dense in $A(K,U)$. 
\end{corollary}

\begin{proof}
Clearly,  $L(\mathcal C\eta, K) \subset A(K,U)$. The same proof of Theorem \ref{MLemma2} applies if we make the following changes. 

If $\frac{1}{2C}\alpha (B(s_{ij}, \sqrt{2}\delta)\setminus U) \le \alpha _\eta (B(s_{ij}, k\sqrt{2}\delta))$, then we set 
$f_{ij}^* = \frac{c_1 (f_{ij})}{c_1 (\mathcal C(\phi_{ij}\eta))}\mathcal C(\phi_{ij}\eta)$ and $h_{ij} = \frac{\mathcal C(\phi_{ij}\eta)}{c_1 (\mathcal C(\phi_{ij}\eta))}$, where $\phi_{ij}$ is a smooth function supported in $B(s_{ij}, k\sqrt{2}\delta)$, $\mathcal C(\phi_{ij}\eta)$ is continuous, $\|\mathcal C(\phi_{ij}\eta)\| \le 1$, and $\|\phi_{ij}\eta\| \ge \frac 12\alpha _\eta (B(s_{ij}, k\sqrt{2}\delta))$.

Otherwise, we have $\frac{1}{2C} \alpha (B(s_{ij}, \sqrt{2}\delta)\setminus U) \le \gamma (B(s_{ij}, k\sqrt{2}\delta)\setminus K)$. Then we set 
$f_{ij}^* = \frac{c_1 (f_{ij})}{c_1 (a_{ij})}a_{ij}$ and $h_{ij} = \frac{a_{ij}}{c_1 (a_{ij})}$, where $a_{ij}$ is analytic off a compact subset of  $B(s_{ij}, k\sqrt{2}\delta) \setminus K$, $\|a_{ij}\| \le 1$, and $a_{ij}'(\infty) \ge \frac 12 \gamma (B(s_{ij}, k\sqrt{2}\delta)\setminus gK)$. 
\end{proof}
\smallskip

It is clear that $\alpha _{\eta}(B(\lambda, \delta)) \le \alpha _{\eta +}(B(\lambda, \delta))$. Next we prove that actually $\alpha _{\eta +}(B(\lambda, \delta)) \approx \alpha _{\eta}(B(\lambda, \delta))$.

\begin{lemma}\label{MTheoremLemma1}
Let $\eta \in M_0^+(\mathbb C)$ such that $\mathcal C\eta$ is continuous on $\mathbb C$. Let $\nu\in L(\mathcal C\eta, K)^\perp \subset C(K)^*$. Then 
 \[
 \ \mathcal C \nu (z) = 0, ~ \eta-a.a. 
 \]
\end{lemma}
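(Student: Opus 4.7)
The strategy is to test the orthogonality $\nu \perp A_\eta|_K$ against Cauchy transforms $\mathcal C(f\eta)$ for carefully chosen $f \in L^\infty(\eta)$. Whenever $\mathcal C(f\eta)$ is continuous on $\mathbb C$, Corollary \ref{MCorollary1} places it in $A_\eta$, giving $\int \mathcal C(f\eta)\,d\nu = 0$. A truncated Fubini identity will then transfer this vanishing into $\int f\mathcal C\nu\,d\eta = 0$, after which a contradiction argument forces $\mathcal C\nu = 0$ $\eta$-a.e.

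Two preliminary observations: by Corollary \ref{CTExist} (1) and Lemma \ref{lemmaBasic0} (2), $\mathcal C\nu(z)$ is defined for $\eta$-a.e.\ $z$; and by Theorem \ref{TTolsa} (3) together with Lemma \ref{lemmaBasic0} (2), the level sets $F_M := \{z : \mathcal C_*\nu(z) \le M\}$ satisfy $\eta(F_M) \nearrow \|\eta\|$ as $M\to\infty$. Now assume for contradiction that $\mathcal C\nu \ne 0$ on some set of positive $\eta$-measure. A routine partition of the argument and modulus of $\mathcal C\nu$ produces $\theta_0\in\mathbb R$, $c>0$, and a Borel set $A$ with $\eta(A)>0$ on which $\mathrm{Re}(e^{i\theta_0}\mathcal C\nu) \ge c$. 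Choosing $M$ large enough that $E := A\cap F_M$ has $\eta(E)>0$ and applying Lemma \ref{lemmaBasic0} (3) to $E$, I obtain a function $f$ with $0\le f\le 1$, supported on $E$, satisfying $\int f\,d\eta \ge c_1\eta(E) > 0$, and with $\mathcal C(f\eta)$ continuous on $\mathbb C$. By Corollary \ref{MCorollary1} this puts $\mathcal C(f\eta) \in A_\eta$, so $\int\mathcal C(f\eta)\,d\nu = 0$.

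For every $\epsilon > 0$ the bounded kernel $\chi_{|w-z|>\epsilon}/(w-z)$ permits a direct Fubini exchange,
\[
\int \mathcal C_\epsilon(f\eta)(z)\,d\nu(z) = -\int f(w)\mathcal C_\epsilon\nu(w)\,d\eta(w).
\]
On the left, $\mathcal C(f\eta)$ is continuous and vanishes at infinity, hence uniformly continuous on $\mathbb C$, so Lemma \ref{lemmaBasic0} (1) (applied after rescaling to unit norm) yields uniform convergence $\mathcal C_\epsilon(f\eta) \to \mathcal C(f\eta)$, and the $\nu$-integrals converge to $0$. On the right, pointwise convergence $\mathcal C_\epsilon\nu \to \mathcal C\nu$ holds $\eta$-a.e., and because $f$ is supported on $E\subset F_M$ the integrand is dominated by $M\chi_E \in L^1(\eta)$, so dominated convergence yields the limit $-\int f\mathcal C\nu\,d\eta$. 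Hence $\int f\mathcal C\nu\,d\eta = 0$, and taking the real part of $e^{i\theta_0}\int f\mathcal C\nu\,d\eta = 0$ contradicts
\[
\int f\,\mathrm{Re}(e^{i\theta_0}\mathcal C\nu)\,d\eta \ge c\int f\,d\eta > 0.
\]

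The main obstacle is that Lemma \ref{lemmaBasic0} (4) is not directly applicable here, because $\mathcal C_*\nu$ need not be bounded on $\mathrm{spt}(\eta)$. Preselecting the test function $f$ via Lemma \ref{lemmaBasic0} (3) on a level set $F_M$ is precisely the device that simultaneously forces continuity of $\mathcal C(f\eta)$ (so that Corollary \ref{MCorollary1} applies) and provides the $\eta$-a.e.\ uniform domination required to execute dominated convergence on the right-hand side.
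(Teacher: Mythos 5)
Your proof is correct and matches the paper's approach step for step: argue by contradiction, restrict to a positive-$\eta$-measure set where $\mathcal C_*\nu$ is bounded, apply Lemma \ref{lemmaBasic0} (3) to produce a test function $f$ with $\mathcal C(f\eta)$ continuous, invoke Corollary \ref{MCorollary1} to place $\mathcal C(f\eta)$ in $A_\eta$, and run a Fubini exchange to reach $\int f\,\mathcal C\nu\,d\eta = 0$, yielding the contradiction. The only deviation is that the paper compresses your truncated-Fubini/dominated-convergence step by citing Lemma \ref{lemmaBasic0} (4) applied to $w\eta$ --- whose support sits inside $D$, not all of $\mathrm{spt}(\eta)$, so your stated reason for Lemma (4) being inapplicable is slightly off --- while you inline that argument, which is sound and, if anything, a touch more careful about the pointwise-versus-essential boundedness of $\mathcal C_*\nu$.
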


\begin{proof}
Suppose that there exists a compact subset $D$ with $\eta(D) > 0$ such that   
 \[
 \ Re(\mathcal C \nu (z)) > 0, ~z\in D.
 \]
By Corollary \ref{CTExist} (2) and Lemma \ref{lemmaBasic0} (2), we may assume that $\mathcal C_* \nu (z)\in L^\infty(\eta |_D)$. 
Using Lemma \ref{lemmaBasic0} (3), we can find a function $w$ supported on $D$ and $0 \le w(x) \le 1$ such that $\int w(x) d \eta(x) \gtrsim \eta (D)$ and $\mathcal C(w\eta)(z)$ is continuous. From Corollary \ref{MCorollary1}, we see that $\mathcal C(w\eta) \in L(\mathcal C\eta, K)$. Using Lemma \ref{lemmaBasic0} (4),  we get
 \[
 \ \int Re(\mathcal C \nu (z)) w(z) d\eta (z)  = - Re \left (\int \mathcal C(w\eta)(z) d \nu (z)\right ) = 0  
 \]
which implies that $Re(\mathcal C \nu (z)) w(z) = 0, ~ \eta-a.a.$. This is a contradiction.  
\end{proof}
\smallskip

\begin{lemma}\label{MTheoremLemma2}
Let $\mu\in M_0^+(\mathbb C)$  such that $\mathcal C\mu$ is continuous on $\mathbb C$ and $\|\mathcal C\mu\| \le 1$. Let $\{\nu_n\}_{n=1}^\infty\subset M_0(\mathbb C)$ such that $\mathcal C\nu_n (z) = 0, ~\mu -a.a.$ Then there exists an absolute constant $C>0$ and a Borel function $w$ with $w \ge 0$ satisfying
 \begin{eqnarray}\label{MTheoremLemma2Eq1}
 \ \|\mu\| \le C \|w\mu\|,~ \|\mathcal C(w\mu)\|_{L^\infty(\mathbb C)}\le C,
  \end{eqnarray}
	$\mathcal C(w\mu)(z)$ is continuous, and 
$\int \mathcal C(w\mu) (z) d \nu_n (z) = 0$ for $n \ge 1$.
\end{lemma}

\begin{proof}
By Corollary \ref{CTExist} (2), we find a subset $E_0 \subset \text{spt}(\mu)$ such that $\gamma( \text{spt}(\mu) \setminus E_0)$ is small and
$\mathcal C_*(\nu_n)(z) \le M_n < \infty,~ z \in E_0$ for $n \ge 1$. 
Using Lemma \ref{lemmaBasic0} (2), we see that $\mu (E_0^c)$ is small and hence, we assume that $\|\mu\| \le 2 \mu(E_0)$. Using Lemma \ref{lemmaBasic0} (3), we find $w$ with $w \ge 0$ and $\text{spt}(w) \subset  E_0$ such that
 \eqref{MTheoremLemma2Eq1} holds and  
$\mathcal C(w\mu)(z)$ is continuous. Applying Lemma \ref{lemmaBasic0} (4), we have
 \[
 \ \int \mathcal C(w\mu)(z) d \nu_n(z)  = - \int \mathcal C\nu_n(z) w(z) d \mu (z) = 0.
 \]
for $n\ge 1$. The lemma is proved. 
\end{proof}
\smallskip

\begin{corollary}\label{MCorollary3}
Let $\eta\in M_0^+(\mathbb C)$ such that $\mathcal C\eta$ is continuous on $\mathbb C$. Then there exists an absolute constant $C>0$ such that $\alpha _{\eta +}(B(\lambda, \delta)) \le C \alpha _{\eta}(B(\lambda, \delta))$.  
\end{corollary}

\begin{proof}
There exists $\mu \in M_0^+(\mathbb C)$ with $\text{spt} (\mu) \subset B(\lambda, \delta)$ such that $\mu$ is
absolutely continuous with respect to $\eta,$ $\mathcal C\mu$ is continuous on $\mathbb C$, $|\mathcal C\mu (z)|\le 1$ for $z\in \mathbb C$, and $ \|\mu\| \ge \frac 12 \alpha _{\eta +}(B(\lambda, \delta))$. Let $\{\nu_n \}\subset L(\mathcal C\eta, K)^\perp$ be a weak-star dense subset. Then by Lemma \ref{MTheoremLemma1}, we have $\mathcal C\nu_n (z) = 0, ~\mu-a.a.$ From Lemma \ref{MTheoremLemma2}, we find $w$ with $w \ge 0$ such that \eqref{MTheoremLemma2Eq1} holds,   
$\mathcal C(w\mu)(z)$ is continuous, and $\int \mathcal C(w\mu) (z) d \nu_n (z) = 0$ for $n \ge 1$. Thus, by Hahn–Banach theorem, we get $\mathcal C(w\mu)\in L(\mathcal C\eta, K)$.  Therefore, $\|w\mu\| \lesssim \alpha _{\eta}(B(\lambda, \delta))$. This completes the proof.    
\end{proof}
\smallskip

Let 
 \begin{eqnarray}\label{ComponentDef}
 \ \mathbb C \setminus K = \bigcup_{m=0}^\infty U_m, ~ U = \bigcup_{m=1}^\infty W_m,
 \end{eqnarray}
where $U_0$ is the unbounded connected component of $\mathbb C \setminus K$, $U_m$ is a bounded connected component  of $\mathbb C \setminus K$ for $m \ge 1$, and $W_m$ is a connected component of $U$.

\begin{proposition}\label{MProposition1}
Let $\eta \in M_0^+(\mathbb C)$ with $\text{spt}(\eta)\subset \mathbb C \setminus U$ such that $\mathcal C\eta$ is continuous on $\mathbb C$. If $\text{spt}(\eta)\cap U_m \ne \emptyset$ for $m \ge 0$ and $L(\mathcal C\eta, K) + R(K)$ is uniformly dense in $A(K,U)$, then $L(\mathcal C\eta, K) = A(K,U)$.  
\end{proposition}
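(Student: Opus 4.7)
The plan is a Hahn--Banach/duality argument. Let $\nu \in (A_\eta|_K)^\perp \subset C(K)^*$; since $A_\eta|_K + R(K)$ is uniformly dense in $A(K)$, it suffices to prove additionally that $\nu \perp R(K)$, which together with $\nu \perp A_\eta|_K$ will give $\nu \in A(K)^\perp$ and hence uniform density of $A_\eta|_K$ in $A(K)$ by Hahn--Banach. Lemma~\ref{MTheoremLemma1} already gives $\mathcal C\nu(z) = 0$ $\eta$-a.a., so the core task is to upgrade this $\eta$-a.e.\ vanishing to the pointwise identity $\mathcal C\nu \equiv 0$ on each connected component $U_m$ of $\mathbb C \setminus K$.

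The key ingredient for that upgrade is that $\eta$ has no atoms. Indeed, $\gamma(\{p\}) = 0$ for every point $p \in \mathbb C$, so Lemma~\ref{lemmaBasic0}~(2), applied to a suitable rescaling of $\eta$, forces $\eta(\{p\}) = 0$. Fix a component $U_m$ and pick $p_m \in \text{spt}(\eta) \cap U_m$, available by hypothesis. Choose $r > 0$ so small that $\overline{B(p_m, r)} \subset U_m$; because $p_m \in \text{spt}(\eta)$, $\eta(B(p_m, r)) > 0$, and atomlessness forces $\{z \in B(p_m, r) : \mathcal C\nu(z) = 0\}$ to be infinite. By Bolzano--Weierstrass this set has an accumulation point in $\overline{B(p_m, r)} \subset U_m$. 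Since $\mathcal C\nu$ is analytic on $\mathbb C \setminus K \supset U_m$ (as $\text{spt}(\nu) \subset K$), the identity theorem forces $\mathcal C\nu \equiv 0$ on $U_m$.

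To conclude, I extract moment identities from $\mathcal C\nu \equiv 0$ on each $U_m$. The Laurent expansion of $\mathcal C\nu$ at $\infty$ on $U_0$ yields $\int z^n \, d\nu = 0$ for every $n \ge 0$, so $\nu$ annihilates polynomials. Taylor-expanding $\mathcal C\nu$ around any point $\lambda$ in a bounded $U_m$ yields $\int (z-\lambda)^{-k}\, d\nu(z) = 0$ for every $k \ge 1$. Partial fraction decomposition then gives $\nu \perp Rat(K)$, hence $\nu \perp R(K)$. I expect the main obstacle to be the middle step: passing from the weak $\eta$-a.e.\ statement to pointwise vanishing on the open set $U_m$. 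Both the atomlessness of $\eta$ (ultimately from Tolsa's theorems via Lemma~\ref{lemmaBasic0}~(2)) and the support condition $\text{spt}(\eta) \cap U_m \ne \emptyset$ are essential for this bootstrap to succeed.
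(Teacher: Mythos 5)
Your proposal is correct and takes essentially the same route as the paper: annihilator $\nu\perp A_\eta|_K$, Lemma~\ref{MTheoremLemma1} to get $\mathcal C\nu=0$ $\eta$-a.e., atomlessness of $\eta$ plus $\text{spt}(\eta)\cap U_m\ne\emptyset$ to produce a non-isolated zero set of $\mathcal C\nu$ inside $U_m$, and the identity theorem to force $\mathcal C\nu\equiv 0$ on each $U_m$ and hence $\nu\perp R(K)$. The only cosmetic differences are that the paper derives atomlessness directly from \eqref{lemmaBasic0Eq1} rather than from Lemma~\ref{lemmaBasic0}(2), and that it leaves the final implication ``$\mathcal C\nu\equiv 0$ on $\mathbb C\setminus K\Rightarrow\nu\perp R(K)$'' implicit, which you correctly spell out via Laurent/Taylor moments and partial fractions.
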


\begin{proof}
Let $\nu \perp L(\mathcal C\eta, K)$. Then $\mathcal C\nu$ is analytic on $U_m$ and by Lemma \ref{MTheoremLemma1}, we get $\mathcal C\nu(z) = 0, ~\eta-a.a.$ Since $\text{spt}(\eta)\cap U_m \ne \emptyset$ and $\eta$ has no atoms (see \eqref{lemmaBasic0Eq1}), for $\lambda\in \text{spt}(\eta)\cap U_m$, we conclude that there exists a sequence $\{\lambda_n\}\subset \text{spt}(\eta)\cap U_m$ such that $\lambda_n\rightarrow \lambda$ and $\mathcal C\nu (\lambda_n ) = 0$. Hence, $\mathcal C\nu (z) = 0$ for $z\in U_m$. This implies $\nu \perp R(K)$. Thus, $R(K) \subset L(\mathcal C\eta, K)$.  
\end{proof} 

\bigskip

\section{\textbf{The space $P(\mathcal C\eta, K)$}}
\smallskip

Let $\nu \perp A(K,U)$. For a bounded Borel function $\psi$ compactly supported in $\mathbb C \setminus U$, we get $\mathcal C(\psi\mathcal L^2) \in A(K,U)$ and
 \[
 \ \int \mathcal C\nu \psi d \mathcal L^2 = - \int \mathcal C(\psi\mathcal L^2) d\nu = 0 
 \]
which implies $\mathcal C\nu(z) = 0,~\mathcal L^2 |_{\mathbb C\setminus U}-a.a.$ 
For $f\in A(K,U)$ and $\lambda \in U$, it is clear that $\frac{f(z) - f(\lambda)}{z-\lambda}\in A(K,U)$. Therefore,
 \begin{eqnarray}\label{IIdentity}
 \ f(z) \mathcal C\nu(z) = \mathcal C(f\nu)(z),~\mathcal L^2-a.a. 
 \end{eqnarray}
The identity \eqref{IIdentity} is an important and useful property. In this section, we discuss some properties related to \eqref{IIdentity} for a closed subspace $\mathcal M\subset A(K,U)$, which will be used in next section for our construction of $\eta$ to ensure $A(K,U) = P(\mathcal C\eta, K)$. We start with the following simple lemma.  

\begin{lemma}\label{MLemma1}
Let $\mathcal M$ be a closed subspace of $A(K,U)$. Let $\eta \in M_0^+(\mathbb C)$ such that $\mathcal C\eta$ is continuous. If for $\nu \perp \mathcal M$,
\begin{eqnarray}\label{IIdentity1}
 \ \mathcal C\eta(z) \mathcal C\nu(z) = \mathcal C((\mathcal C\eta)\nu)(z),~\mathcal L^2-a.a. 
 \end{eqnarray}
then $L(\mathcal C\eta, K) \subset \mathcal M$.
\end{lemma}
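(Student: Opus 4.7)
The plan is to reduce the inclusion $A_\eta|_K \subset \mathcal M$ to a Hahn--Banach duality statement and then use the multiplicative hypothesis distributionally to extract that $\mathcal C\nu$ vanishes $\eta$-a.e. By Hahn--Banach it suffices to show that every $\nu \in C(K)^*$ with $\nu \perp \mathcal M$ also annihilates $A_\eta|_K$; since $A_\eta$ is by definition the closed linear span of $\{\mathcal C(\varphi\eta) : \varphi \in C_c^\infty(\mathbb C)\}$, this amounts to verifying $\int \mathcal C(\varphi\eta)\,d\nu = 0$ for every smooth compactly supported $\varphi$. Granting for the moment that $\mathcal C\nu(z) = 0$ for $\eta$-a.e.\ $z$, Lemma \ref{lemmaBasic0}(4) (applied after localizing via Corollary \ref{CTExist}(2) to a subset on which $\mathcal C_*\nu$ is bounded, with the complement controlled by Lemma \ref{lemmaBasic0}(2)) delivers $\int \mathcal C(\varphi\eta)\,d\nu = -\int \varphi\,\mathcal C\nu\,d\eta = 0$, closing the argument.

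The real work is therefore to show $\mathcal C\nu = 0$ $\eta$-a.e., and for this I plan to exploit the hypothesis in distributional form. Fix a smooth compactly supported test function $\psi$ and let $u_\epsilon := \mathcal C\eta * \phi_\epsilon$ be the standard smoothing of $\mathcal C\eta$ by a radial mollifier; continuity of $\mathcal C\eta$ ensures $u_\epsilon \to \mathcal C\eta$ uniformly on compact sets, while $\bar\partial u_\epsilon = -\pi(\eta * \phi_\epsilon)$ is smooth. Since $u_\epsilon\psi$ is smooth and compactly supported, the distributional identity $\int \mathcal C\nu\,\bar\partial f\,d\mathcal L^2 = \pi \int f\,d\nu$ applies with $f = u_\epsilon\psi$; expanding $\bar\partial(u_\epsilon\psi)$ by Leibniz gives
\begin{equation*}
\pi \int u_\epsilon\,\psi\,d\nu \;=\; -\pi \int \mathcal C\nu\,\psi\,(\eta * \phi_\epsilon)\,d\mathcal L^2 \;+\; \int \mathcal C\nu\, u_\epsilon\,\bar\partial\psi\,d\mathcal L^2.
\end{equation*}
As $\epsilon \to 0$ the left-hand side tends to $\pi \int (\mathcal C\eta)\psi\,d\nu$, while the last integral tends by dominated convergence to $\int (\mathcal C\eta)(\mathcal C\nu)\,\bar\partial\psi\,d\mathcal L^2$, which by the hypothesis equals $\int \mathcal C((\mathcal C\eta)\nu)\,\bar\partial\psi\,d\mathcal L^2 = \pi \int (\mathcal C\eta)\psi\,d\nu$. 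These two equal pieces cancel, leaving
\begin{equation*}
\lim_{\epsilon \to 0} \int \mathcal C\nu(z)\,\psi(z)\,(\eta * \phi_\epsilon)(z)\,d\mathcal L^2(z) \;=\; 0.
\end{equation*}
Identifying this limit with $\int \psi\,\mathcal C\nu\,d\eta$ for every smooth $\psi$ would then force $\mathcal C\nu = 0$ $\eta$-a.e.

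The hard part will be precisely the last identification, because $\eta$ may be mutually singular with $\mathcal L^2$, so Lebesgue differentiation alone does not transfer $\mathcal L^2$-a.e.\ convergence of the smoothings of $\mathcal C\nu$ to $\eta$-a.e.\ convergence. My plan for this step is to exploit the linear-growth control $\eta(B(z,\epsilon)) \le \epsilon\,\omega(\mathcal C\eta,\epsilon)$ from Lemma \ref{lemmaBasic0}(1) (which forces $\eta$ to be sufficiently \emph{thin}), together with the pointwise bound $|\tilde{\mathcal C}_\epsilon\nu - \mathcal C_\epsilon\nu| \lesssim |\nu|(B(z,\epsilon))/\epsilon$ appearing inside the proof of that same lemma and the $\eta$-a.e.\ existence of $\mathcal C\nu$ (Corollary \ref{CTExist}(1) combined with Lemma \ref{lemmaBasic0}(2)). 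Localizing via Corollary \ref{CTExist}(2) to a set $F$ on which $\mathcal C_*\nu$ is bounded reduces the identification on $F$ to a dominated-convergence argument, and the small-$\eta$-mass estimate on $F^c$ provided by Lemma \ref{lemmaBasic0}(2) handles the residual contribution, completing the proof.
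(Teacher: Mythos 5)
Your overall strategy is correct in broad outline (Hahn--Banach reduction to annihilation, duality via Lemma \ref{lemmaBasic0}(4)), but it takes a genuinely longer route than the paper does, and the detour contains a real gap.

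The paper's proof is a one-step Fubini computation that avoids everything after your first paragraph. Writing out $T_\varphi\mathcal C\eta(\lambda)=\frac1\pi\int\frac{\mathcal C\eta(\zeta)-\mathcal C\eta(\lambda)}{\zeta-\lambda}\bar\partial\varphi(\zeta)\,d\mathcal L^2(\zeta)$ and integrating against $\nu$, Fubini (justified because $\mathcal C\eta$ is bounded, $\nu$ is finite, and $\int |\bar\partial\varphi(\zeta)|/|\zeta-\lambda|\,d\mathcal L^2(\zeta)$ is bounded uniformly in $\lambda$) gives
\[
\int T_\varphi\mathcal C\eta\,d\nu=\frac1\pi\int\bigl(\mathcal C((\mathcal C\eta)\nu)(\zeta)-\mathcal C\eta(\zeta)\mathcal C\nu(\zeta)\bigr)\bar\partial\varphi(\zeta)\,d\mathcal L^2(\zeta),
\]
which vanishes by \eqref{IIdentity1}. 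Since $T_\varphi\mathcal C\eta=\mathcal C(\varphi\eta)$, this already shows $\nu\perp A_\eta|_K$, with no pointwise convergence, no mollifier argument, and no knowledge of $\mathcal C\nu$ along $\eta$.

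Your plan instead tries to first prove the much stronger intermediate statement $\mathcal C\nu=0$ $\eta$-a.e. In the paper this is Lemma \ref{MTheoremLemma1}, proved \emph{after} Lemma \ref{MLemma1} and using $\nu\perp A_\eta|_K$ as a hypothesis together with the nontrivial Lemma \ref{lemmaBasic0}(3); deriving it directly from \eqref{IIdentity1} is strictly harder than the lemma you are asked to prove. The place where your proposal breaks is exactly the ``identification'' step $\lim_{\epsilon\to0}\int\mathcal C\nu\,\psi\,(\eta*\phi_\epsilon)\,d\mathcal L^2=\int\psi\,\mathcal C\nu\,d\eta$. Rewriting, you need $\int(\tilde{\mathcal C}_\epsilon\nu)\psi\,d\eta\to\int(\mathcal C\nu)\psi\,d\eta$, and the proof of Lemma \ref{lemmaBasic0}(1) gives $|\tilde{\mathcal C}_\epsilon\nu(w)-\mathcal C_\epsilon\nu(w)|\lesssim |\nu|(B(w,\epsilon))/\epsilon$. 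Corollary \ref{CTExist}(2) controls $\mathcal C_\epsilon\nu$ (through $\mathcal C_*\nu$) but says nothing about this density term; you would need $\Theta_{|\nu|}(w)=0$ for $\eta$-a.e.\ $w$, which is false in general --- if $|\nu|$ and $\eta$ both charge the same rectifiable curve, $\Theta_{|\nu|}$ is bounded away from zero on a set of positive $\eta$-measure. The linear growth $\eta(B(z,\epsilon))\le\epsilon\,\omega(\mathcal C\eta,\epsilon)$ controls $\eta$, not $|\nu|$, so it does not help. In effect you are trying to reprove the Leibniz identity $\bar\partial(\mathcal C\eta\cdot\mathcal C\nu)=-\pi((\mathcal C\nu)\eta+(\mathcal C\eta)\nu)$ of Lemma \ref{lemmaBasic0}(6), but that lemma requires \emph{both} Cauchy transforms to be continuous, and its proof leans on uniform convergence of the mollifications, which fails for $\mathcal C\nu$. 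I'd recommend abandoning the mollifier detour and using the direct Fubini argument.
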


\begin{proof}
Let $\varphi$ be a smooth function with compact support.
We have the following calculation
 \[
 \ \begin{aligned}
 \ & \int T_\varphi \mathcal C\eta(z) d \nu(z) \\
 \ = & \dfrac{1}{\pi}\int \int \dfrac{\mathcal C\eta(z) - \mathcal C\eta(w)}{z - w} \bar \partial \varphi (w) d\mathcal L^2(w) d \nu(z) \\
 \ = & \dfrac{1}{\pi}\int (\mathcal C ((\mathcal C\eta)\nu) (w) - \mathcal C\eta (w) \mathcal C \nu (w)) \bar \partial \varphi (w) d\mathcal L^2(w)\\
 \ = & 0. 
 \ \end{aligned}
 \]
The lemma is proved.
\end{proof}
\smallskip 

Define 
 \[
 \ A_{\eta} = \{\lambda \in \mathbb C:~ \mathcal C \eta (z) - \mathcal C \eta (\lambda) = (z - \lambda)F_\lambda,\text{ for some }F_\lambda \in P(\mathcal C\eta, K)\}.
 \]
For $\nu \perp P(\mathcal C\eta, K)$, we get
 \begin{eqnarray}\label{GenEq50}
 \ \mathcal C\eta(z)\mathcal C(\nu)(z) = \mathcal C((\mathcal C\eta)\nu)(z), ~ \gamma |_{A_{\eta}}-a.a..
 \end{eqnarray}

We need some technical assumptions on $A_\eta$ below. 

\begin{assumption} (A): $\gamma (A_{\eta}\cap W_m) > 0$ for $m\ge 1$.

(B): For $m\ge 1$, $U_m \cap \text{spt}(\eta) \ne \emptyset$, $\text{int} (U_m \cap \text{spt}(\eta))= \emptyset$, $U_m \setminus \text{spt}(\eta)$ is a non-empty connected open subset, and $(U_m \setminus \text{spt}(\eta)) \cap A_{\eta}$ contains a sequence and its limit.

(C): There exists $B\subset K\setminus U$ with $\mathcal L^2(B) = 0$ such that for $\lambda \in (K\setminus U) \setminus B$, we have
 \[
 \ \underset{\delta\rightarrow 0}{\overline{\lim}}\dfrac{\gamma (B(\lambda, \delta) \cap (A_\eta\cup\bigcup_{m=1}^\infty U_m))}{\delta} > 0.
 \]
\end{assumption}
\smallskip

By \eqref{ComponentDef}, $\mathbb C = (\cup_{m=1}^\infty W_m)\cup(\cup_{m=0}^\infty U_m)\cup(K\setminus U).$ If $W_m\cap A_\eta,$ $U_m\cap A_\eta,$ and $(K\setminus U)\cap A_\eta$ are some \quotes{small} non-empty subsets as in the above assumptions, using a continuity property of Cauchy transform (see Lemma \ref{CauchyTLemma}), we will be able to prove \eqref{GenEq50} actually holds $\mathcal L^2-a.a.$ for $z\in \mathbb C$ (see Lemma \ref{GenLemma3}).

We will construct a finite positive measure $\eta$ that satisfies the assumptions (A), (B), and (C) in next section. The following lemma is due to Lemma 3.2 in \cite{acy19}.

\begin{lemma}\label{CauchyTLemma} 
Let $\nu\in M_0(\mathbb C)$. For some $\lambda _0$ in $\mathbb C$, if $\Theta_{\nu} (\lambda _0 ) = 0$ and $\mathcal{C} (\nu)(\lambda _0) = \lim_{\epsilon \rightarrow 0}\mathcal{C} _{\epsilon}(\nu)(\lambda _0)$ exists. then:

(1) $\mathcal{C}(\nu)(\lambda) = \lim_{\epsilon \rightarrow 0}\mathcal{C} _{\epsilon}(\nu)(\lambda )$ 
exists for $\lambda\in Z^c$ with $\gamma(Z) = 0$ and

(2) for $a > 0$,  
 \[ 
 \ \lim_{\delta\rightarrow 0}\dfrac{\gamma (B(\lambda_0, \delta) \cap \{|\mathcal{C}(\nu)(\lambda) - \mathcal{C}(\nu)(\lambda_0)| > a\})}{\delta} = 0.
 \]
\end{lemma}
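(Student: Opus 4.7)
The plan is to reduce part (1) to a citation and attack part (2) by a standard decomposition of $\nu$ relative to the ball $B(\lambda_0,2\delta)$, exploiting the hypothesis $\Theta_\nu(\lambda_0)=0$ at two places.

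Part (1) is essentially Corollary \ref{CTExist} (1) applied to the single measure $\nu$: the principal value of the Cauchy transform exists outside a set of analytic capacity zero. So the substantive content is in (2).

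For part (2), fix $a>0$ and, for small $\delta>0$, split $\nu=\nu_1+\nu_2$ with $\nu_1=\nu|_{B(\lambda_0,2\delta)}$ and $\nu_2=\nu-\nu_1$. First I would show that
\[
\sup_{\lambda\in B(\lambda_0,\delta)}|\mathcal C\nu_2(\lambda)-\mathcal C\nu_2(\lambda_0)|\longrightarrow 0\quad\text{as }\delta\to 0.
\]
Since $\nu_2$ has no mass in $B(\lambda_0,2\delta)$ the difference is an ordinary integral, bounded by $|\lambda-\lambda_0|\int_{|w-\lambda_0|\ge 2\delta}\frac{d|\nu|(w)}{|w-\lambda||w-\lambda_0|}$, and using $|w-\lambda|\ge |w-\lambda_0|/2$ I need
\[
\delta\int_{|w-\lambda_0|\ge 2\delta}\frac{d|\nu|(w)}{|w-\lambda_0|^{2}}\longrightarrow 0.
\]
This is the one technical point: I would split the integral at some threshold $r_0$ where $|\nu|(B(\lambda_0,r))\le\epsilon r$ for $r\le r_0$ (available because $\Theta_\nu(\lambda_0)=0$), use integration by parts in $r$ on the inner annulus to bound that piece by $C\epsilon$, and use the total mass $\|\nu\|/r_0^{2}$ on the outer piece; then let $\delta\to 0$ followed by $\epsilon\to 0$.

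Next I would note that $\mathcal C\nu_1(\lambda_0)=\mathcal C\nu(\lambda_0)-\mathcal C_{2\delta}\nu(\lambda_0)\to 0$ as $\delta\to 0$, by the existence of the principal value at $\lambda_0$. Combining the last two observations, for $\delta$ small enough,
\[
B(\lambda_0,\delta)\cap\{|\mathcal C\nu(\lambda)-\mathcal C\nu(\lambda_0)|>a\}\subset\{\lambda:\,\mathcal C_*\nu_1(\lambda)>a/4\}
\]
up to a set of analytic capacity zero (this is where part (1) enters, since $|\mathcal C\nu_1|\le\mathcal C_*\nu_1$ only off such a set). Then Tolsa's weak type inequality, Theorem \ref{TTolsa} (3), gives
\[
\gamma\!\left(B(\lambda_0,\delta)\cap\{|\mathcal C\nu-\mathcal C\nu(\lambda_0)|>a\}\right)\le \frac{4C_T}{a}\|\nu_1\|=\frac{4C_T}{a}|\nu|(B(\lambda_0,2\delta)).
\]
Dividing by $\delta$ and invoking $\Theta_\nu(\lambda_0)=0$ once more finishes (2).

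The only step that needs any real care is the uniform smallness of $\mathcal C\nu_2(\lambda)-\mathcal C\nu_2(\lambda_0)$; everything else is a direct application of Tolsa's inequality together with the two hypotheses. In particular, no semiadditivity or capacitary covering argument is required.
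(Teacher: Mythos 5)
Your proof is correct. Note, however, that there is no in-paper proof to compare against: the paper simply cites Lemma~3.2 of [Y19] for this statement. Your argument — split $\nu$ into $\nu|_{B(\lambda_0,2\delta)}$ and its complement, control the far piece by the dyadic integration-by-parts estimate using $\Theta_\nu(\lambda_0)=0$, control $\mathcal C\nu_1(\lambda_0)$ by the existence of the principal value, and then hit the near piece with Tolsa's weak-type inequality (Theorem~\ref{TTolsa}~(3)) — is the standard route to this kind of capacitary continuity statement and is in all likelihood what the cited reference does. Two small remarks: part~(1) is literally Corollary~\ref{CTExist}~(1) and in fact uses neither hypothesis on $\lambda_0$, as you observed; and in the final inclusion you should replace $a/4$ by $a/2$ (after making the two tail terms each $<a/4$, the hypothesis $|\mathcal C\nu(\lambda)-\mathcal C\nu(\lambda_0)|>a$ forces $|\mathcal C\nu_1(\lambda)|>a/2$), though this only changes the constant in front of $|\nu|(B(\lambda_0,2\delta))/\delta$ and is harmless.
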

\smallskip

\begin{lemma}\label{GenLemma1}
Let $\eta \in M_0^+(\mathbb C)$ with $\text{spt}(\eta)\subset\mathbb C\setminus (U_0 \cup U)$ such that $\mathcal C \eta$ is continuous on $\mathbb C$. Let $\nu \perp P(\mathcal C\eta, K)$. The two properties below hold.
\newline
(1) If assumption (A) holds, then
 \begin{eqnarray}\label{GenEq51}
 \ \mathcal C\eta(z)\mathcal C(\nu)(z) = \mathcal C((\mathcal C\eta)\nu)(z), ~ \mathcal L^2|_{W_m}-a.a.,~ m \ge 1.
 \end{eqnarray}
\newline
(2) If assumption (B) holds, then
\begin{eqnarray}\label{GenEq5}
 \ \mathcal C(\nu)(z) = \mathcal C((\mathcal C\eta)\nu)(z) = 0, ~ z\in U_m,~ m \ge 0,
 \end{eqnarray}
 and
 \begin{eqnarray}\label{GenEq52}
 \ \mathcal C(\nu)(z) = \mathcal C((\mathcal C\eta)\nu)(z) = 0, ~  ~ \mathcal L^2 |_{\partial U_m}-a.a.,~ m \ge 0.
 \end{eqnarray}

\end{lemma}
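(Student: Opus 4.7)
The guiding object is $G(z):=\mathcal C((\mathcal C\eta)\nu)(z)-\mathcal C\eta(z)\mathcal C\nu(z)$, which by the computation preceding the lemma satisfies $G=0$ $\gamma$-a.e.\ on $B_\eta$. The strategy in each part is to compute $\bar\partial G$ distributionally on the relevant open set and then exploit an identity-theorem argument. For (1), since $\text{spt}(\eta)\cap\text{int}(K)=\emptyset$, $\mathcal C\eta$ is holomorphic (hence smooth) on each $W_m$, so the product rule gives $\bar\partial(\mathcal C\eta\,\mathcal C\nu)=\mathcal C\eta\cdot(-\pi\nu)$ on $W_m$; this matches $\bar\partial\mathcal C((\mathcal C\eta)\nu)=-\pi(\mathcal C\eta)\nu$, so $\bar\partial G=0$ on $W_m$. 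By Weyl's lemma, $G$ agrees $\mathcal L^2$-a.e.\ on $W_m$ with a holomorphic $g$; since $\gamma$-null sets are $\mathcal L^2$-null by \eqref{AAlphaGamma}, $g$ vanishes on $B_\eta\cap W_m$, a set of positive Lebesgue measure by (A), and the identity theorem forces $g\equiv 0$.

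For (2) with $m=0$, the inclusions $\mathcal P,\mathcal P\mathcal C\eta\subset P(\mathcal C\eta,K)$ annihilate every moment $\int z^k\,d\nu$ and $\int z^k\mathcal C\eta\,d\nu$, so the Laurent expansions at $\infty$ show $\mathcal C\nu$ and $\mathcal C((\mathcal C\eta)\nu)$ vanish on $U_0$ by analyticity. For $m\ge 1$, on the connected open set $U_m\setminus\text{spt}(\eta)$ all three of $\mathcal C\eta,\mathcal C\nu,\mathcal C((\mathcal C\eta)\nu)$ are analytic. For $\lambda\in(U_m\setminus\text{spt}(\eta))\cap B_\eta$, the function $F_\lambda(z)=(\mathcal C\eta(z)-\mathcal C\eta(\lambda))/(z-\lambda)$ lies in $P(\mathcal C\eta,K)$, and since $\lambda\notin\text{spt}(\nu)$ the integrals are classical, so $0=\int F_\lambda\,d\nu$ unfolds to $G(\lambda)=0$. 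Assumption (B) supplies a convergent sequence in this set with limit in it, so the identity theorem propagates $G\equiv 0$ to $U_m\setminus\text{spt}(\eta)$; continuity of $\mathcal C\eta$ on $\mathbb C$ together with the density of $U_m\setminus\text{spt}(\eta)$ in $U_m$ (from $\text{int}(U_m\cap\text{spt}(\eta))=\emptyset$) extends $G\equiv 0$ to all of $U_m$.

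To conclude (2) on $U_m$, the analyticity (hence smoothness) of $\mathcal C\nu$ on $U_m$ makes $\bar\partial G|_{U_m}=\pi\mathcal C\nu\cdot\eta|_{U_m}$ a genuine Radon measure, whose vanishing forces $\mathcal C\nu=0$ $\eta|_{U_m}$-a.e. Because $\eta(U_m)>0$ (a sufficiently small ball around any point of $U_m\cap\text{spt}(\eta)$ lies in $U_m$) and $\eta$ has no atoms (by \eqref{lemmaBasic0Eq1}), this zero set is uncountable, so the identity theorem for analytic functions on the connected $U_m$ yields $\mathcal C\nu\equiv 0$ on $U_m$, and therefore $\mathcal C((\mathcal C\eta)\nu)=\mathcal C\eta\,\mathcal C\nu\equiv 0$ on $U_m$.

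For (3), part (2) supplies $\mathcal C\nu\equiv 0\equiv \mathcal C((\mathcal C\eta)\nu)$ on $\mathbb C\setminus K$. The plan is to apply Lemma \ref{CauchyTLemma}(2) at $\mathcal L^2$-a.e.\ $\lambda_0\in\partial U_m$: at such points both principal values exist (Corollary \ref{CTExist}(1) combined with \eqref{AAlphaGamma}) and $\Theta_\nu(\lambda_0)=0$, because the upper linear density of a finite Radon measure on $\mathbb C$ vanishes $\mathcal L^2$-a.e. If $\mathcal C\nu(\lambda_0)\ne 0$, choosing $a=|\mathcal C\nu(\lambda_0)|/2$ and using $\mathcal C\nu\equiv 0$ on $\mathbb C\setminus K$ gives $B(\lambda_0,\delta)\cap(\mathbb C\setminus K)\subset\{|\mathcal C\nu-\mathcal C\nu(\lambda_0)|>a\}$, so Lemma \ref{CauchyTLemma}(2) forces $\gamma(B(\lambda_0,\delta)\cap(\mathbb C\setminus K))/\delta\to 0$. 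The anticipated main obstacle is to rule this out at $\mathcal L^2$-a.e.\ $\lambda_0\in\partial U_m$, which will require combining Tolsa's semiadditivity \eqref{Semiadditive} (via $\gamma(B(\lambda_0,\delta))/\delta=1$, which forces at least one of $\gamma(B\cap K)/\delta$, $\gamma(B\setminus K)/\delta$ to remain bounded below along a subsequence) with a Vitali-type covering estimate and Lemma \ref{lemmaBasic0}(5) to show the exceptional set has $\mathcal L^2$-measure zero; the parallel argument with $(\mathcal C\eta)\nu$ in place of $\nu$ yields the second identity in \eqref{GenEq52}.
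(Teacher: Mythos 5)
Parts (1) and (2) are correct and essentially reproduce the paper's argument: same distributional $\bar\partial$-computation and Weyl's lemma in (1), and in (2) the same sequence of steps — $G=0$ on $(U_m\setminus\text{spt}(\eta))\cap B_\eta$ via classical evaluation of $\int F_\lambda\,d\nu$, identity theorem on the connected set $U_m\setminus\text{spt}(\eta)$, extension to $U_m$ by continuity and density, then $\bar\partial G|_{U_m}=\pi(\mathcal C\nu)\eta|_{U_m}=0$, and finally the identity theorem from the perfect (atom-free) set $\text{spt}(\eta)\cap U_m$. Your ``uncountable set has an accumulation point inside $U_m$'' phrasing is a slight detour, but it is valid since an uncountable subset of $U_m$ must meet some compactum of a compact exhaustion of $U_m$ in an infinite set.

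Part (3) has a genuine gap, and you flag it yourself. You correctly reduce the problem: at $\mathcal L^2$-a.e.\ $\lambda_0\in\partial U_m$ both principal values exist and $\Theta_\nu(\lambda_0)=0$, and Lemma \ref{CauchyTLemma}(2) plus $\mathcal C\nu\equiv 0$ on $\mathbb C\setminus K$ would force $\gamma(B(\lambda_0,\delta)\setminus K)/\delta\to 0$ if $\mathcal C\nu(\lambda_0)\neq 0$. But the route you sketch to rule this out — semiadditivity of $\gamma(B(\lambda_0,\delta))$, a Vitali covering argument, and Lemma \ref{lemmaBasic0}(5) — does not obviously close: semiadditivity only forces one of $\gamma(B\cap K)/\delta$ and $\gamma(B\setminus K)/\delta$ to stay bounded below, and nothing you write pins down which. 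The paper instead uses a concrete geometric fact: $\lambda_0\in\partial U_m$ means $U_m$ enters $B(\lambda_0,\delta/2)$, so one can draw a path $P_{\lambda_0,\delta}\subset U_m$ from a point of $B(\lambda_0,\delta/2)$ to a point outside $B(\lambda_0,\delta)$; the connected piece of this path inside $B(\lambda_0,\delta)$ has diameter $\gtrsim\delta$, hence $\gamma(B(\lambda_0,\delta)\cap P_{\lambda_0,\delta})\gtrsim\delta$. Since $P_{\lambda_0,\delta}\subset U_m\subset\mathbb C\setminus K$, this gives a lower bound $\gamma(B(\lambda_0,\delta)\setminus K)\gtrsim\delta$ at every scale, directly contradicting the conclusion $\gamma(B(\lambda_0,\delta)\setminus K)/\delta\to 0$. (Equivalently, semiadditivity applied to $P_{\lambda_0,\delta}$ and the Lemma \ref{CauchyTLemma}(2) bad set shows the good set $\{|\mathcal C\nu-\mathcal C\nu(\lambda_0)|\le\epsilon\}$ meets $P_{\lambda_0,\delta}\cap B(\lambda_0,\delta)$, producing a sequence $\lambda_k\in U_m$, $\lambda_k\to\lambda_0$, $\mathcal C\nu(\lambda_k)\to\mathcal C\nu(\lambda_0)$; then part (2) forces $\mathcal C\nu(\lambda_0)=0$.) This path construction, which holds for \emph{every} $\lambda_0\in\partial U_m$ rather than merely $\mathcal L^2$-a.e., is the missing idea. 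The same argument applied to $(\mathcal C\eta)\nu$ finishes \eqref{GenEq52}.
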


\begin{proof}
(1) Since $\text{spt}\eta \cap W_m = \emptyset$, $\mathcal C\eta(z)$ is analytic on $W_m$. From \eqref{CTDistributionEq}, we have
 \[
 \ \bar \partial (\mathcal C\eta(z)\mathcal C(\nu)(z) - \mathcal C((\mathcal C\eta)\nu)(z)) = 0,~ \mathcal L^2|_{W_m}-a.a.
 \]
By Weyl’s lemma, there exists an analytic function $a(z)$ on $W_m$ such that
\[
 \ \mathcal C\eta(z)\mathcal C(\nu)(z) - \mathcal C((\mathcal C\eta)\nu)(z) = a(z),  ~ \mathcal L^2|_{W_m}-a.a.
 \]
By \eqref{GenEq50} and Lemma \ref{CauchyTLemma}, for $\lambda \in A_\eta \cap W_m, ~\gamma-a.a.,$
\[ 
 \ \lim_{\delta\rightarrow 0}\dfrac{\gamma (B(\lambda, \delta) \cap \{|\mathcal C\eta(z)\mathcal C(\nu)(z) - \mathcal C((\mathcal C\eta)\nu)(z)| > \epsilon\})}{\delta} = 0,
 \]
 therefore, from \eqref{AAlphaGamma},
 \[ 
 \ \lim_{\delta\rightarrow 0}\dfrac{\mathcal L^2 (B(\lambda, \delta) \cap \{|a(z)| > \epsilon\})}{\delta} = 0.
 \]
Hence, we see $a(z)=0,  ~ \gamma |_{A_\eta\cap W_m}-a.a.$ \eqref{GenEq51} follows.
  
(2) The case for $m=0$ follows from $\text{spt}(\eta)\cap U_0 = \emptyset$ and $P(K) \subset P(\mathcal C\eta, K)$. Since $\mathcal C(\nu)(z)$ and $\mathcal C((\mathcal C\eta)\nu)(z)$ are analytic on $U_m$ and $\mathcal C\eta(z)$ is analytic on $U_m\setminus \text{spt}\eta$,  we get from assumption (B) that,
\[
 \ \mathcal C\eta(z)\mathcal C(\nu)(z) = \mathcal C((\mathcal C\eta)\nu)(z), ~ z\in U_m\setminus \text{spt}\eta.
 \]
From the fact that  $\mathcal C\eta(z)$ is continuous on $\mathbb C$ and assumption (B), we conclude that
 \[
 \ \mathcal C\eta(z)\mathcal C(\nu)(z) = \mathcal C(\mathcal C\eta\nu)(z), ~ \mathcal L^2|_{ U_m}-a.a.
 \]
Using \eqref{CTDistributionEq} and taking $\bar\partial$ both sides, we get $\mathcal C(\nu)(z) = 0,~\eta|_{U_m}-a.a.$ Hence, 
\[
 \ \mathcal C(\nu)(z) = \mathcal C(\mathcal C\eta\nu)(z) = 0, ~ z\in U_m.
 \]

There is a subset $Z$ with $\mathcal L^2(Z) = 0$ such that for $\lambda_0\in \partial U_m\setminus Z$, we have
 $\int \frac{1}{|z-\lambda_0|} d |\nu| < \infty$.
Applying Lemma \ref{CauchyTLemma}, we have
 \[
 \ \lim_{\delta\rightarrow 0}\dfrac{\gamma(B(\lambda_0,\delta)\cap\{|\mathcal C\nu(\lambda) - \mathcal C\nu(\lambda_0)|>\epsilon\})}{\delta} = 0.
 \]
Let $P_{\lambda_0, \delta}$ be a path stating at a point in  $B(\lambda_0, \frac{\delta}{2})$ and ending at a point in $U_m \setminus B(\lambda_0, \delta)$ such that $P_{\lambda_0, \delta}\subset U_m$. Using Theorem \ref{TTolsa} (2), we get 
\[
 \ \begin{aligned}
 \ &\underset{\delta\rightarrow 0}{\overline{\lim}} \dfrac{\gamma(B(\lambda_0, \delta)\cap P_{\lambda_0, \delta} \cap \{|\mathcal C\nu(\lambda) - \mathcal C\nu(\lambda_0)|\le\epsilon\})}{\delta} \\
 \ \ge &\dfrac{1}{C_T}\underset{\delta\rightarrow 0}{\overline{\lim}} \dfrac{\gamma(B(\lambda_0, \delta)\cap P_{\lambda_0, \delta})}{\delta} - \underset{\delta\rightarrow 0}{\lim} \dfrac{\gamma(B(\lambda_0, \delta)\cap \{|\mathcal C\nu(\lambda) - \mathcal C\nu(\lambda_0)| > \epsilon\})}{\delta} \\
 \ = &\dfrac{1}{C_T}\underset{\delta\rightarrow 0}{\overline{\lim}} \dfrac{\gamma(B(\lambda_0, \delta)\cap P_{\lambda_0, \delta})}{\delta} \\
 \ > &0.
 \ \end{aligned}
 \]
There exists a sequence $\lambda_k\in P_{\lambda_0, \delta_k} \subset U_m$ with $\lambda_k\rightarrow \lambda_0$ and $\mathcal C\nu(\lambda_k)\rightarrow \mathcal C\nu(\lambda_0)$, which implies $\mathcal C\nu(\lambda_0) = 0$ by \eqref{GenEq5}. Similarly, $\mathcal C(\mathcal C\eta\nu)(\lambda_0) = 0$. \eqref{GenEq52} is proved.
\end{proof}
\smallskip

\begin{lemma}\label{GenLemma3}
Let $\eta  \in M_0^+(\mathbb C)$ with $\text{spt}(\eta) \subset\mathbb C\setminus (U_0 \cup U)$ such that $\mathcal C \eta$ is continuous on $\mathbb C$. If $A_\eta$ satisfies assumption (A), (B), and (C), then for $\nu \perp P(\mathcal C\eta, K)$,
 \begin{eqnarray}\label{GenEq6}
 \ \mathcal C\eta(z)\mathcal C(\nu)(z) = \mathcal C((\mathcal C\eta)\nu)(z), ~ \mathcal L^2-a.a.
 \end{eqnarray}
\end{lemma}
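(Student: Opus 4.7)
The plan is to reduce the claim to the inner boundary via Lemma \ref{GenLemma1}, and then use assumption (C) together with a $\gamma$-density continuity argument (powered by Lemma \ref{CauchyTLemma}) to propagate the known vanishing of $g(z) := \mathcal C\eta(z)\mathcal C\nu(z) - \mathcal C((\mathcal C\eta)\nu)(z)$ from $B_\eta \cup \bigcup_{m\ge 1} U_m$ to $\mathcal L^2$-almost every point of $\partial_I K$.

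First, parts (1), (2) and (3) of Lemma \ref{GenLemma1} already give $g = 0$, $\mathcal L^2$-a.e., on $\bigcup_m W_m$, on $\bigcup_m U_m$ (in fact pointwise), and on $\bigcup_m \partial U_m$ respectively. Since by definition of the inner boundary $\partial K = \partial_I K \cup \bigcup_m \partial U_m$, the set $\mathbb C \setminus \partial_I K$ is covered by these three pieces, so the entire task reduces to verifying $g(\lambda_0) = 0$ for $\mathcal L^2$-a.e.~$\lambda_0 \in \partial_I K$.

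Now fix a generic $\lambda_0 \in \partial_I K$, outside the exceptional $\mathcal L^2$-null set $B$ of assumption (C) and outside the $\mathcal L^2$-null set on which $\mathcal C\nu$ or $\mathcal C((\mathcal C\eta)\nu)$ fails to exist or on which $\Theta_\nu$ or $\Theta_{(\mathcal C\eta)\nu}$ is positive (both measures are finite with compact support, $\mathcal C\eta$ is bounded, so these conditions hold $\mathcal L^2$-a.e.). Using the decomposition
\[
g(z)-g(\lambda_0) = \bigl(\mathcal C\eta(z)-\mathcal C\eta(\lambda_0)\bigr)\mathcal C\nu(z) + \mathcal C\eta(\lambda_0)\bigl(\mathcal C\nu(z)-\mathcal C\nu(\lambda_0)\bigr) - \bigl(\mathcal C((\mathcal C\eta)\nu)(z)-\mathcal C((\mathcal C\eta)\nu)(\lambda_0)\bigr),
\]
I would invoke Lemma \ref{CauchyTLemma}(2) applied separately to $\nu$ and to $(\mathcal C\eta)\nu$, combined with the continuity of $\mathcal C\eta$, to establish the $\gamma$-density vanishing
\[
\gamma\bigl(B(\lambda_0,\delta) \cap \{|g(z)-g(\lambda_0)| > \epsilon\}\bigr) = o(\delta) \quad \text{as } \delta \to 0, \qquad \forall\,\epsilon>0.
\]

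Finally, suppose for contradiction that $g(\lambda_0)\ne 0$, and set $\epsilon = |g(\lambda_0)|/2$. By \eqref{GenEq50} the set $B_\eta \cap \{g\ne 0\}$ is $\gamma$-null, while by Lemma \ref{GenLemma1}(2) $g\equiv 0$ on $\bigcup_m U_m$; hence $(B_\eta\cup\bigcup_m U_m)\cap\{g\ne 0\}$ is $\gamma$-null, and on its complement $|g(z)-g(\lambda_0)| = |g(\lambda_0)|>\epsilon$. Semiadditivity (Theorem \ref{TTolsa}(2)) therefore yields
\[
\gamma\bigl(B(\lambda_0,\delta)\cap(B_\eta\cup\textstyle\bigcup_{m\ge 1} U_m)\bigr) \le A_T\,\gamma\bigl(B(\lambda_0,\delta)\cap\{|g(z)-g(\lambda_0)|>\epsilon\}\bigr) = o(\delta),
\]
contradicting assumption (C). The main technical obstacle is establishing the $\gamma$-density bound on $\{|g-g(\lambda_0)|>\epsilon\}$ in the face of the possibly unbounded factor $\mathcal C\nu$ in the cross-term; the resolution is that Lemma \ref{CauchyTLemma} bounds $|\mathcal C\nu(z)|$ by $|\mathcal C\nu(\lambda_0)|+\epsilon$ off a set of vanishing $\gamma$-density, after which the modulus of continuity of $\mathcal C\eta$ absorbs the product.
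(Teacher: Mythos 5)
Your proposal is correct and follows essentially the same route as the paper's proof: reduce to $\partial_I K$ via Lemma \ref{GenLemma1}, then at a generic $\lambda_0 \in \partial_I K$ combine Lemma \ref{CauchyTLemma}(2) applied to $\nu$ and to $(\mathcal C\eta)\nu$ with the continuity of $\mathcal C\eta$, semiadditivity, and assumption~(C). The only difference is cosmetic: you phrase the final step as a contradiction on the $\gamma$-density of $\{|g - g(\lambda_0)| > \epsilon\}$, whereas the paper directly extracts a sequence $\lambda_k \to \lambda_0$ along which the identity holds and all three Cauchy transforms converge to their values at $\lambda_0$; these are logically equivalent presentations of the same argument.
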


\begin{proof} From Lemma \ref{GenLemma1}, it remains to prove \eqref{GenEq6} for $\lambda \in K\setminus U$. 
Now for $\lambda \in K\setminus U$ with $\int \frac{1}{|z-\lambda|} d |\nu |(z) < \infty$, we set $B(\nu, \lambda, \epsilon) = \{|\mathcal C\nu(z) - \mathcal C\nu(\lambda)|>\epsilon\}$. Using Lemma \ref{CauchyTLemma}, we get
 \[
 \ \underset{\delta\rightarrow 0}{\lim} \dfrac{\gamma(B(\lambda, \delta)\cap B(\nu, \lambda, \epsilon))}{\delta} = \underset{\delta\rightarrow 0}{\lim} \dfrac{\gamma(B(\lambda, \delta)\cap B((\mathcal C\eta)\nu, \lambda, \epsilon))}{\delta} = 0.
 \]
By Theorem \ref{TTolsa} (2), \eqref{GenEq5},  and assumption (C), we see that
 \[
 \ \begin{aligned}
 \ &\underset{\delta\rightarrow 0}{\overline{\lim}} \dfrac{\gamma(B(\lambda, \delta)\cap B(\nu, \lambda, \epsilon)^c \cap B((\mathcal C\eta)\nu, \lambda, \epsilon)^c \cap\{\mathcal C\eta(z)\mathcal C(\nu)(z) = \mathcal C((\mathcal C\eta)\nu)(z)\})}{\delta} \\
 \ \ge &\dfrac{1}{C_T}\underset{\delta\rightarrow 0}{\overline{\lim}} \dfrac{\gamma(B(\lambda, \delta)\cap\{\mathcal C\eta(z)\mathcal C(\nu)(z) = \mathcal C((\mathcal C\eta)\nu)(z)\})}{\delta} \\
 \ & - \underset{\delta\rightarrow 0}{\lim} \dfrac{\gamma(B(\lambda, \delta)\cap B(\nu, \lambda, \epsilon))}{\delta} - \underset{\delta\rightarrow 0}{\lim} \dfrac{\gamma(B(\lambda, \delta)\cap B((\mathcal C\eta)\nu, \lambda, \epsilon))}{\delta} \\
 \ \ge &\dfrac{1}{C_T}\underset{\delta\rightarrow 0}{\overline{\lim}} \dfrac{\gamma(B(\lambda, \delta)\cap (A_\eta\cup \bigcup_{m=1}^\infty U_m))}{\delta} \\
 \ > &0.
 \ \end{aligned}
 \]
Therefore, there exists $\{\lambda_k\}$ such that $\lambda_k \rightarrow \lambda$, $\mathcal C\eta(\lambda_k)\mathcal C(\nu)(\lambda_k) = \mathcal C((\mathcal C\eta)\nu)(\lambda_k)$, $\mathcal C\eta(\lambda_k)\rightarrow \mathcal C\eta(\lambda)$, $\mathcal C(\nu)(\lambda_k)\rightarrow \mathcal C(\nu)(\lambda)$, and $\mathcal C((\mathcal C\eta)\nu)(\lambda_k)\rightarrow \mathcal C((\mathcal C\eta)\nu)(\lambda)$. Hence, $\mathcal C\eta(\lambda)\mathcal C(\nu)(\lambda) = \mathcal C((\mathcal C\eta)\nu)(\lambda)$. The lemma is proved.            
\end{proof}
\smallskip

Assuming there exists a finite positive measure $\eta$ satisfying assumption (A), (B), (C), and \eqref{MTheoremEq}, we prove below that each $f\in A(K,U)$ can be uniformly approximated by $p_n + q_n \mathcal C\eta$, where $p_n,q_n \in \mathcal P$. The construction of such a measure $\eta$ involves many technicalities, so we leave it in next section.    

\begin{theorem}\label{MTheorem}
Let $\eta  \in M_0^+(\mathbb C)$ with $\text{spt}(\eta) \subset \mathbb C\setminus (U_0 \cup U)$ such that $\mathcal C \eta$ is continuous on $\mathbb C$. If $A_\eta$ satisfies assumptions (A), (B), (C), and 
\begin{eqnarray}\label{MTheoremEq}
\ \alpha (B(\lambda, \delta)\setminus U) \lesssim \alpha _{\eta +}(B(\lambda, k\delta)) + \gamma (B(\lambda, k\delta)\setminus K)
\end{eqnarray}
for $\lambda\in \mathbb C$ and $0 < \delta < \delta_0$ (for some $\delta_0 > 0$),
then $A(K,U) = P(\mathcal C\eta, K)$.
\end{theorem}

\begin{proof}
It follows from Lemma \ref{GenLemma1} (2) that $R(K) \subset P(\mathcal C\eta, K)$. Lemma \ref{MLemma1} and Lemma \ref{GenLemma3} imply $L(\mathcal C\eta, K) \subset P(\mathcal C\eta, K)$. Now the theorem follows from Theorem \ref{MTheoremIntro1}.
\end{proof}
\smallskip

Before closing this section, we briefly discuss the space $PR(\mathcal C\eta, K).$ Define 
 \[
 \ A_{\eta}' = \{\lambda \in \mathbb C:~ \mathcal C \eta (z) - \mathcal C \eta (\lambda) = (z - \lambda)F_\lambda,\text{ for some }F_\lambda \in PR(\mathcal C\eta, K)\}.
 \]

\begin{assumption} (A'): $\gamma (A_{\eta}'\cap W_m) > 0$ for $m\ge 1$.

(B'): For $m\ge 1$, $U_m \cap \text{spt}(\eta) = \emptyset$ and $U_m \cap A_{\eta}'$ contains a sequence and its limit.

(C'): There exists $B\subset K\setminus U$ with $\mathcal L^2(B) = 0$ such that for $\lambda \in (K\setminus U) \setminus B$, we have
 \[
 \ \underset{\delta\rightarrow 0}{\overline{\lim}}\dfrac{\gamma (B(\lambda, \delta) \cap (A_\eta '\cup\bigcup_{m=1}^\infty U_m))}{\delta} > 0.
 \]
\end{assumption}

Notice that if $\nu\perp PR(\mathcal C\eta, K),$ then $\mathcal C\nu (z) = 0$ for $z\in U_m$ and $m \ge 0.$ Therefore, Lemma \ref{GenLemma1} and Lemma \ref{GenLemma3} hold for assumptions (A'), (B'), and (C'). Thus, we obtain the following theorem for $PR(\mathcal C\eta, K).$

\begin{theorem}\label{MTheoremPR}
Let $\eta  \in M_0^+(\mathbb C)$ with $\text{spt}(\eta) \subset K\setminus U$ such that $\mathcal C \eta$ is continuous on $\mathbb C$. If $A_\eta '$ satisfies assumptions (A'), (B'), (C'), and 
\begin{eqnarray}\label{MTheoremEq}
\ \alpha (B(\lambda, \delta)\setminus U) \lesssim \alpha _{\eta +}(B(\lambda, k\delta)) + \gamma (B(\lambda, k\delta)\setminus K)
\end{eqnarray}
for $\lambda\in \mathbb C$ and $0 < \delta < \delta_0$ (for some $\delta_0 > 0$),
then $A(K,U) = PR(\mathcal C\eta, K)$.
\end{theorem}

\bigskip

\section{\textbf{Construction of $\eta$ satisfying the assumptions of Theorem \ref{MTheorem} or Theorem \ref{MTheoremPR}}}
\smallskip

In this section, we construct a measure $\eta$ and prove (by Theorem \ref{MTheorem} or Theorem \ref{MTheoremPR}) the following theorem.

\begin{theorem}\label{MTheorem2}
The following statements are true.

(1) There exists $\eta \in M_0^+(\mathbb C)$ with $\text{spt}(\eta) \subset \mathbb C\setminus (U_0 \cup U)$ such that $\mathcal C \eta$ is continuous on $\mathbb C$ and $A(K,U) = P(\mathcal C\eta, K)$.

(2) There exists $\eta \in M_0^+(\mathbb C)$ with $\text{spt}(\eta) \subset K\setminus U$ such that $\mathcal C \eta$ is continuous on $\mathbb C$ and $A(K,U) = PR(\mathcal C\eta, K)$.

\end{theorem}
\smallskip

Let $S_{ij}^n$ be defined as in \eqref{PartitionUnity} with $\delta = \frac{1}{2^n}$ and center $Re(s_{ij}) = (i+\frac 12)\frac{1}{2^n}$ and $Im(s_{ij}) = (j+\frac 12)\frac{1}{2^n}.$
Let $l_{in} = \{z:~Re(z) = \frac{i}{2^n}\}$ be a vertical line. Define $\{L_m\}$ as the collection of vertical lines as the following:
\[
\ \{L_m\}_{m=1}^\infty = \{l_{in}:~ l_{in} \cap K \ne \emptyset, ~ -\infty < i < \infty, ~ n = 1,2,...\}.
\]
Our approach is to construct such a measure $\eta$ so that $\cup_m L_m \subset A_\eta.$

Let $U_m$ and $W_m$ be as in \eqref{ComponentDef}. For $m\ge 1,$ let $K_m\subset U_m$ be a compact subset with no interior such that $U_m \setminus K_m$ is connected and $\alpha(K_m) > 0$.
Set
\begin{eqnarray}\label{GenEq000}
 \ K_o = (K\setminus U) \cup \bigcup_{m=1}^\infty K_m.
 \end{eqnarray} 
and
 \begin{eqnarray}\label{GenEq00}
 \ E = K_o \setminus \left ( \bigcup_{m=1}^\infty L_m \right ).
 \end{eqnarray} 

 We assume $\alpha( 2S_{ij}^n\cap E) > 0$ for \eqref{GenEq0}, \eqref{GenEq10}, and \eqref{GenEq1} below. By Theorem \ref{TTolsa} (2), we have
 \begin{eqnarray}\label{GenEq0}
 \ \begin{aligned}
 \ &\alpha( 2S_{ij}^n\cap E) \\
 \ \ge &\alpha( (2S_{ij}^n\cap K_o) \setminus \bigcup_{m=1}^\infty (2S_{ij}^n\cap K_o\cap L_m) \\
 \ \ge &\dfrac{1}{C_T}\alpha(2S_{ij}^n\cap K_o) - \sum_{m=1}^\infty \alpha(2S_{ij}^n\cap K_o\cap L_m) \\
 \ = &\dfrac{1}{C_T}\alpha(2S_{ij}^n\cap K_o). 
\ \end{aligned}
 \end{eqnarray}
 where $\alpha(L_m) = 0.$
From Theorem \ref{TTolsa} (1), we find $\eta_{ij}^n\in M_0^+(\mathbb C)$ with 
 \begin{eqnarray}\label{GenEq10}
 \ \text{spt}(\eta_{ij}^n) \subset 2S_{ij}^n\cap E,~\mathcal C(\eta_{ij}^n) \text{ is continuous},
 \end{eqnarray}
and, by \eqref{GenEq0},
 \begin{eqnarray}\label{GenEq1}
 \ \|\eta_{ij}^n\| \gtrsim \alpha( 2S_{ij}^n\cap E)\gtrsim \alpha(2S_{ij}^n\cap K_o),~ \|\mathcal C(\eta_{ij}^n)\|_{L^\infty (\mathbb C)} \le 1.
 \end{eqnarray}
Let $M_n$ be the number of squares $S_{ij}^n$ with $\alpha( 2S_{ij}^n\cap E) > 0$. We define
 \[
 \ \eta_n = \dfrac{1}{M_n}\sum_{\alpha(2S_{ij}\cap E) > 0}\eta_{ij}^n.
 \]

The measure $\eta_n$ satisfies the following properties
\newline
(A1) $\mathcal C(\eta_n) \in A(K,U)$;
\newline
(A2) $\|\mathcal C(\eta_n)\|_{L^\infty (\mathbb C)} \le 1$; and 
\newline
(A3) For $m \ge 1$, $\text{spt}(\eta_{ij}^n)  \cap L_m = \emptyset$ and $\frac{\mathcal C(\eta_n)(z) - \mathcal C(\eta_n)(\lambda)}{z - \lambda} \in A(K,U)$ for $\lambda \in L_m$. Therefore,
 \[
 \ B_n := \sup_{\lambda \in \cup_{k=1}^nL_k}\left(\int \dfrac{1}{|z-\lambda|} d\eta_n(z) + \left \| \dfrac{\mathcal C(\eta_n)(z) - \mathcal C(\eta_n)(\lambda)}{z - \lambda} \right \|_{L^\infty (\mathbb C)}\right ) < \infty.
 \]  
\smallskip

Let $\{u_k\}$ be a dense subset of $\cup_{m=1}^\infty L_m$ such that $\{u_k\}\cap L_m$ is dense in $L_m$ for $m\ge 1$. Set $f_k(z) = \frac{1}{z-u_k}$.
There is $m_1$ such that $u_1\in L_{m_1}$. Set
 $d_1 = \text{dist}(L_{m_1},\text{spt}(\eta_1))$, then $d_1 > 0$. 
Let $D_1 = \{x:~ \text{dist}(x,\text{spt}(\eta_1)) \le \frac{1}{2}d_1\}$. 
Since $L_{m_1}$ is a subset of the unbounded component of $\mathbb C \setminus D_1$, we can find a polynomial $p_{11}$ such that
 \[
 \ \left \|p_{11}(z) - \dfrac{1}{z-u_1}\right \|_{C (\partial \hat D_1)} = \left \|p_{11}(z) - \dfrac{1}{z-u_1}\right \|_{C (D_1)} \le \dfrac{d_1}{2(4\|\eta_1\| + 1)}
 \]
where $\hat D_1$ denotes the polynomial convex hull of $D_1$.
We have the following calculation (the maximum modulus principle is applied):
 \begin{eqnarray}\label{PolyEstimate}
 \ \begin{aligned}
 \ & \left \|\mathcal C(p_{11}\eta_1)(z) - \dfrac{\mathcal C\eta_1(z) - \mathcal C\eta_1(u_1)}{z - u_1} \right \|_{C (D_1)} \\
 \ \le &\left \|\int \left (\dfrac{p_{11}(w) - p_{11}(z)}{w - z} - \dfrac{f_1(w) - f_1(z)}{w - z}\right )d \eta_1 (w) \right \|_{C (D_1)} \\
 \  & + \|p_{11} - f_1 \|_{C (D_1)} \| \mathcal C\eta_1 \|_{C (D_1)} \\
 \ \le &\int \left \|\dfrac{p_{11}(w) - p_{11}(z)}{w - z} - \dfrac{f_1(w) - f_1(z)}{w - z}\right \|_{C (\partial \hat D_1)}d \eta_1 (w) \\
 \  & + \|p_{11} - f_1 \|_{C (D_1)} \\
 \ \le & \int \dfrac{2\|p_{11} - f_1\|_{C (\partial \hat D_1)}}{\text{dist}(w,\partial \hat D_1)}d \eta_1 (w) + \|p_{11} - f_1 \|_{C (D_1)}  \\
 \ \le & \dfrac 12. 
 \ \end{aligned}
 \end{eqnarray}
Set $a_1 = \min(\frac{1}{2}, \frac{1}{2B_1})$ and $\xi_1 = a_1 \eta_1$. We find $L_{m_2}$ with $u_2\in L_{m_2}$.  
Set
$d_2 = \text{dist}(L_{m_1}\cup L_{m_2},\text{spt}(\eta_1) \cup \text{spt}(\eta_2))$, then  $d_2> 0$.
Let $D_2 = \{x:~ \text{dist}(x,\text{spt}(\eta_1) \cup \text{spt}(\eta_2)) \le \frac{1}{2}d_2\}$.
Let 
 \[
 \ A_2 = \left \|\mathcal C(p_{11}\eta_2)(z) - \dfrac{\mathcal C(\eta_2)(z) - \mathcal C(\eta_2)(u_1)}{z - u_1} \right \|_{C (D_2)}
 \]
and
 \[
 \ a_2 = \min \left (\dfrac{1}{4}, \dfrac{1}{4A_2}, \dfrac{1}{4B_2} \right ), ~ \xi_2 = \xi_1 + a_2\eta_2. 
 \]
Then
 \[
 \ \left \|\mathcal C(p_{11}\xi_2)(z) - \dfrac{\mathcal C(\xi_2)(z) - \mathcal C(\xi_2)(u_1)}{z - u_1} \right \|_{C (D_2)} \le \frac12 + \frac14.
 \]
Similarly to \eqref{PolyEstimate}, we can find two polynomials $p_{k2}$ such that  
 \[
 \ \left \|\mathcal C(p_{k2}\xi_2)(z) - \dfrac{\mathcal C(\xi_2)(z) - \mathcal C(\xi_2)(u_k)}{z - u_k} \right \|_{C (D_2)} \le \frac14
 \]
for $k = 1,2$.

We find $L_{m_k}$ with $u_k\in L_{m_k}$.  
Set
$d_k = \text{dist}(\cup_{i=1}^kL_{m_i},\cup_{i=1}^k\text{spt}(\eta_i))$, then $d_k > 0$.
Let $D_k = \{x:~ \text{dist}(x,\cup_{i=1}^k\text{spt}(\eta_i)) \le \frac{1}{2}d_k\}$.
Therefore, we can find polynomials $\{p_{kj}\}_{k\le j}$ and positive measures $\{\xi_j\}$ such that, for $k \le l \le j + 1$,
 \[
 \ \left \|\mathcal C(p_{kl}\xi_{j+1})(z) - \dfrac{\mathcal C(\xi_{j+1})(z) - \mathcal C(\xi_{j+1})(u_k)}{z - u_k} \right \|_{C (D_{j+1})} \le \sum_{i = l}^{j+1}\frac{1}{2^i} \le \frac{2}{2^l}.
 \]
Using the maximum modulus principle, we have
 \[
 \ \left \|\mathcal C(p_{kl}\xi_{j+1})(z) - \dfrac{\mathcal C(\xi_{j+1})(z) - \mathcal C(\xi_{j+1})(u_k)}{z - u_k} \right \|_{C (K\cup K_o)}  \le \frac{2}{2^l}.
 \]
It is clear that, by the construction, 
 \begin{eqnarray}\label{EtaDefinition}
 \ \eta = \sum_{n=1}^\infty a_n\eta_n
 \end{eqnarray}
 is well defined. We conclude that from (A1)-(A3) and for $u\in L_m$, $\|\mathcal C\eta\|_{L^\infty(\mathbb C)} \le 1$, $\mathcal C\eta$ is continuous, $\frac{\mathcal C\eta(z) - \mathcal C\eta(u)}{z-u} = \mathcal C(\frac{\eta}{w-u})(z)~ \mathcal L^2-a.a.$, $\|\mathcal C(\frac{\eta}{w-u})\|_{L^\infty(\mathbb C)} \le 1$, and $\mathcal C(\frac{\eta}{w-u})(z)$ is continuous. Therefore, for $k \le l$, 
 \[
 \ \left \|\mathcal C(p_{kl}\eta)(z) - \dfrac{\mathcal C\eta(z) - \mathcal C\eta(u_k)}{z - u_k} \right \|_{C(K\cup K_o)} \le \frac{1}{2^{l-1}}.
 \]
Notice that 
 \[
 \ \begin{aligned}
 \ & \mathcal C(p_{kl}\eta)(z) \\
 \ = & \int \dfrac{p_{kl}(w) - p_{kl}(z)}{w - z} d\eta(w) + p_{kl}(z)\mathcal C\eta(z) (\in P(\mathcal C\eta,K)), ~\mathcal L^2-a.a.
 \ \end{aligned} 
 \]
Hence,
 \[
 \ \dfrac{\mathcal C\eta(z) - \mathcal C\eta(u_k)}{z - u_k} \in P(\mathcal C\eta,K).
 \]
By the construction, it is easy to show that for $\lambda \in L_m$, there exists a sequence $\{u_{n_k}\}\subset L_m$ with $u_{n_k}\rightarrow \lambda$ and
 \[
\ \left \| \dfrac{\mathcal C\eta(z) - \mathcal C\eta(u_{n_k})}{z - u_{n_k}} - \dfrac{\mathcal C\eta(z) - \mathcal C\eta(\lambda)}{z - \lambda} \right \|_{C(K\cup K_o)} \rightarrow 0. 
\]
Thus,
 \begin{eqnarray}\label{GenEq4}
\ \dfrac{\mathcal C\eta(z) - \mathcal C\eta(\lambda)}{z - \lambda} \in P(\mathcal C\eta,K) 
\end{eqnarray}
for $\lambda \in L_m$ and $m \ge 1$.
From Corollary \ref{CTExist} (1) and for $\nu \perp P(\mathcal C\eta,K)$, we conclude that 
 \[
 \ \mathcal C\eta(z)\mathcal C(\nu)(z) = \mathcal C((\mathcal C\eta)\nu)(z), ~ \gamma |_{L_m}-a.a.
 \]
\smallskip

Now we are ready to prove that $\eta$ satisfies the assumptions of Theorem \ref{MTheorem}.

\begin{proof} (Theorem \ref{MTheorem2} (1))
By \eqref{GenEq4}, we have
 \[
 \ \bigcup_{m=1}^\infty L_m \subset A_\eta.
 \]

There exists $L_{m_0}$ such that $\gamma (L_{m_0} \cap W_m) > 0.$ Assumption (A) follows.

There exists $L_{m_0}$ such that $\gamma (L_{m_0} \cap (U_m \setminus K_m)) > 0.$ Assumption (B) follows.

Assumption (C) holds since for $\lambda \in K\setminus U$, there is a subsequence $\{L_{m_k}\}$ such that $dist(\lambda, L_{m_k}) \rightarrow 0.$ Therefore,
 \[
 \ \underset{\delta\rightarrow 0}{\overline{\lim}} \dfrac{\gamma(B(\lambda, \delta)\cap \bigcup_{m=1}^\infty L_m)}{\delta} > 0.
 \]
For \eqref{MTheoremEq}, by Theorem \ref{TTolsa} (2) and \eqref{AAlphaGamma}, we have the following calculation.
 \[
 \ \begin{aligned}
 \ & \alpha(2S_{ij}^n\setminus U) \\
 \ \le & C_T(\alpha(2S_{ij}^n\cap (K\setminus U)) + \alpha(2S_{ij}^n\setminus K)) \\
 \ \lesssim & \alpha(2S_{ij}^n\cap  (K\setminus U)) + \gamma(2S_{ij}^n \setminus K).
 \ \end{aligned}
 \]
\eqref{MTheoremEq} follows from
 \[
 \ \alpha(2S_{ij}^n\cap  (K\setminus U)) \lesssim \|\eta_{ij}^n\| \lesssim \alpha _{\eta+} (B(s_{ij}, \sqrt 2 \frac {1}{2^n}))
 \]
 by \eqref{GenEq1}.
The proof of Theorem \ref{MTheorem2} (1) now follows from Theorem \ref{MTheorem}.
   
\end{proof}
\smallskip

For Theorem \ref{MTheorem2} (2), we only  need to change \eqref{GenEq000} to the following:

\begin{eqnarray}
 \ K_o = K\setminus U.
 \end{eqnarray} 
 \smallskip
 
 \begin{remark}
 Note that we can choose $\eta$ such that 
 \[
 \ \eta(\sum_{m=0}^\infty \partial U_m) = 0.
 \]	
 This is ensured by Lemma \ref{GenLemma1} (2).
 \end{remark}

\bigskip

\centerline{\bf Acknowledgment}

The author would like to thank the referee for carefully reading the manuscript and providing helpful comments.

\bigskip

\appendix
%    Include appendix "chapters" here.
%\include{}

%\backmatter
%    Bibliography styles amsplain or author-year (using natbib) are
%    also acceptable.
\bibliographystyle{amsalpha}

\end{document}